\title{Graded topological spaces}
\author{Clemens Koppensteiner}
\setlist[enumerate,1]{label=(\roman*)}
\setlist[enumerate,2]{label=(\alph*)}
\setlist[enumerate,3]{label=(\Alph*)}
\setlist[enumerate,4]{label=\arabic*.}
\newcommand{\stackcite}[1]{\cite[\href{http://stacks.math.columbia.edu/tag/#1}{Tag~#1}]{stacks-project}}
\theoremstyle{plain}
\newtheorem{Theorem}{Theorem}[section]
\newtheorem{Proposition}[Theorem]{Proposition}
\newtheorem{Corollary}[Theorem]{Corollary}
\newtheorem{Lemma}[Theorem]{Lemma}
\theoremstyle{definition}
\newtheorem{Definition}[Theorem]{Definition}
\theoremstyle{remark}
\newtheorem{Remark}[Theorem]{Remark}
\newcommand\ZZ{\mathbb{Z}}
\newcommand\RR{\mathbb{R}}
\newcommand\CC{\mathbb{C}}
\newcommand\cat\mathbf
\newcommand\Hom{\operatorname{Hom}}
\newcommand\opcat[1]{{#1}^{\mathrm{op}}}    
\newcommand\catMod[1]{#1\mbox{-}\cat{mod}}  
\newcommand\supp{\operatorname{supp}}   
\newcommand\sheaf\mathcal
\newcommand\sheafHom{\underline{\Hom}}  
\newcommand\catPreSh[1]{\cat{PSh}(#1)}  
\newcommand\catSh[1]{\cat{Sh}(#1)}      
\newcommand\catDSh[2][]{\cat{D}^{#1}(#2)}   
\newcommand\catDbSh[2][]{\catDSh[b]{#2}}    
\newcommand\DD{\mathbb{D}}              
\newcommand\dc{\omega}           
\newcommand\LL{\mathbb{L}}      
\newcommand\Lotimes{\otimes^\LL}    
\newcommand\RHom{\RR\Hom}           
\newcommand\RsheafHom{\RR\sheafHom} 
\newcommand\ul\underline
\newcommand\gr{\operatorname{gr}}   
\newcommand\from\leftarrow
\newcommand\onto\twoheadrightarrow
\newcommand\isoto{\xrightarrow{\sim}}
\newcommand\res[2]{\mathchoice{\left.#1\right|_{#2}}{#1|_{#2}}{#1|_{#2}}{#1|_{#2}}} 
\newcommand\pt{\mathrm{pt}}         
\begin{document}

\begin{abstract}
    We introduce the notion of a \enquote{graded topological space}: a topological space endowed with a sheaf of abelian groups which we think of as a sheaf of gradings.
    Any object living on a graded topological space will be graded by this sheaf of abelian groups.
    We work out the fundamentals of sheaf theory and Poincar\'e--Verdier duality for such spaces.
\end{abstract}

\maketitle

\setcounter{tocdepth}{1}
\tableofcontents

\section{Introduction}\label{sec:intro}

Given a topological space $X$, we are interested in graded sheaves on $X$ whose grading varies over $X$.
We formalize this notion by introducing a sheaf of gradings $\Lambda$ and then considering sheaves $\sheaf F$ on $X$ such that for each open $U$ of $X$ the sections of $\sheaf F$ over $U$ are graded by $\Lambda(U)$.
In this short note we develop the basic theory of such objects.

We are mainly motivated by questions in logarithmic geometry.
However, as we expect that the constructions presented here will be useful in other situations, we chose to give this self-contained exposition of graded spaces.
Our main goal is to clarify the definition of $\Lambda$-graded objects and functors between categories of such objects, as well as to show that standard results of sheaf theory continue to hold in this generality.

\subsection*{Motivation}

Our specific motivation is the wish to classify logarithmic connections and D-modules by some analogue of the Riemann--Hilbert correspondence.
In this subsection we discuss in an example why one might expect to obtain graded sheaves in this process.
This also informs the level of generality that we have chosen for the constructions in this article.
We want to emphasize that the present subsection is not necessary for understanding the remainder of this text and may be safely skipped by the reader not interested in logarithmic connections.

Consider the complex line $X = \mathbb A^1_{\CC}$ with coordinate function $z$.
A logarithmic connection on $X$ is a vector bundle with an action of the subbundle of the tangent bundle generated by $z\frac{\partial}{\partial z}$.
So, for the trivial line bundle $\sheaf O$ we could require that $z\frac{\partial}{\partial z}$ acts on sections $f \in \sheaf O$ by $z\frac{\partial}{\partial z} \cdot f = z\frac{\partial f}{\partial z} + \lambda f$ for any fixed $\lambda \in \CC$.

On $X \setminus \{0\}$ this reduces to the usual connections $\frac{\partial}{\partial z} \cdot f = \frac{\partial f}{\partial z} + \frac{\lambda f}{z}$, which only depend on $\lambda \bmod \mathbb{Z}$.
On the other hand, as logarithmic connections the above connections are genuinely different for each $\lambda \in \mathbb{C}$.

Classically, (integrable) connections are equivalent to locally constant sheaves.
As $X = \mathbb{A}^1$ does not support any nontrivial locally constant sheaves, one modifies $X$ slightly by replacing the origin with a circle (i.e.~one takes the real blowup at the origin) \cite{KatoNakayama}.
This greatly increases the number of locally constant sheaves at our disposal.
However locally constant sheaves on this new space $X_{\log}$ can still only record $\lambda \mod \mathbb{Z}$ as the monodromy around the circle.
Thus one grades the sheaves in order to record the residue of $\lambda$ modulo $\mathbb{Z}$ \cite{Ogus, LogDR}.

In order for this construction to generalize the classical situation, one should impose this grading only over the added circle, so that over $X \setminus \{0\}$ one obtains just a classical local system.
In addition, to make this construction work when one replaces vector bundles by coherent sheaves or D-modules, one needs to not only consider sheaves of $\mathbb{C}$-modules, but modules over more general sheaves of $\mathbb{C}$-algebras (which themselves are graded), so that one can record the possible appearance of nilpotent sections.
Thus one naturally arrives at the notion of ringed graded spaces explored in Section~\ref{sec:ringed}.

Classically the Riemann--Hilbert correspondence matches the six functor formalisms of regular holonomic D-modules and (constructible) sheaves of $\CC$-modules.
Of particular importance are the duality functors that exist in both contexts.
In Section~\ref{sec:duality} we show that Poincar\'e--Verdier duality can be extended to graded sheaves, while duality for logarithmic D-modules is introduced in \cite{KT}.
In \cite{LogDR} we give an explicit computation of the dualizing functor for spaces of the form $X_{\log}$ and show that it exactly matches the duality functor for logarithmic D-modules.

\subsection*{Acknowledgments}

The author was supported by the National Science Foundation under Grant No.~DMS-1638352. 

\subsection*{Standing assumptions}

All topological spaces are assumed to be locally compact, and hence in particular Hausdorff.
By a ring we always mean a commutative ring with unit.
We write abelian groups additively with neutral element $0$.

\section{Graded topological spaces}

\begin{Definition}
    A \emph{graded topological space} is a pair $(X,\Lambda)$, consisting of a topological space $X$ and a sheaf of abelian groups $\Lambda$ on $X$.
    A morphism of graded topological spaces $(X,\Lambda_X) \to (Y,\Lambda_Y)$ consists of a pair $(f,f^\flat)$, where $f\colon X \to Y$ is a continuous map and $f^\flat\colon f^{-1}\Lambda_Y \to \Lambda_X$ is a morphism of sheaves of abelian groups.
    Such a morphism is called \emph{strict} if $f^\flat$ is an isomorphism.
\end{Definition}

We will often denote a graded topological space $(X,\Lambda)$ simply by $X$ and similarly a map $(f,f^\flat)$ by $f$.
Any topological space $X$ can be considered as a graded topological space with $\Lambda = 0$.

For an abelian group $\Lambda$, a $\Lambda$-graded $\ZZ$-module $M$ is a $\ZZ$-module with a decomposition $M = \bigoplus_{\lambda \in \Lambda} M_{\lambda}$ for $\ZZ$-modules $M_\lambda$.
If $m \in M$ is homogeneous of degree $\lambda$, we write $\deg(m) = \lambda$.

\begin{Definition}
    Let $(X, \Lambda)$ be a graded topological space.
    A \emph{presheaf} $\sheaf F$ on $(X,\Lambda)$ is an assignment of a $\Lambda(U)$-graded $\ZZ$-module $\sheaf F(U)$ to each open subset $U \subseteq X$ together with restriction maps (of $\ZZ$-modules) $\rho^U_V$ such that $\rho^U_V(\sheaf F(U)_\lambda) \subseteq \sheaf F(V)_{\res{\lambda}V}$ for each $\lambda \in \Lambda(U)$.
    Sometimes we will call such an object a \emph{$\Lambda$-graded presheaf} to emphasize the distinction with ordinary presheaves.
    
    Let $\sheaf F$, $\sheaf G$ be two presheaves on $(X,\Lambda)$.
    A \emph{morphism of $\Lambda$-graded presheaves} $\phi\colon \sheaf F \to \sheaf G$ is an ordinary morphism of presheaves such that in addition $\phi_U(\sheaf F(U)_\lambda) \subseteq \sheaf G(U)_\lambda$ for each open $U$ and $\lambda \in \Lambda(U)$.
    We write $\catPreSh{X,\Lambda}$ for the category of presheaves on $(X,\Lambda)$.

    Let $\lambda \in \Lambda(X)$ and $\sheaf F \in \catPreSh{X,\Lambda}$.
    We write $\sheaf F\langle \lambda\rangle$ for the presheaf with $\Gamma(U,\sheaf F\langle \lambda\rangle)_\mu = \Gamma(U,\sheaf F)_{\mu+\res{\lambda}U}$.
    An element of $\Hom_{\catPreSh{X,\Lambda}}(\sheaf F, \sheaf G\langle \lambda \rangle)$ is called a morphism of degree $\lambda$.
    
    There exists an obvious forgetful functor $\catPreSh{X,\Lambda} \to \catPreSh{X}$.
    We will sometimes silently treat a graded presheaf as an ordinary presheaf on $X$ via this functor.
\end{Definition}

For $x \in X$ one defines the stalk $\sheaf F_x$ of a presheaf in the usual way.
It is a $\Lambda_x$-graded $\ZZ$-module.

\begin{Definition}
    For any $\sheaf F \in \catPreSh{X,\Lambda}$ and any $\lambda \in \Lambda(X)$ we let $\sheaf F_\lambda$ be the ordinary presheaf given by
    \[
        U \mapsto \sheaf F(U)_{\res{\lambda}{U}}.
    \]
\end{Definition}

\begin{Definition}
    Let $(X,\Lambda)$ be a graded topological space.
    A \emph{($\Lambda$-graded) sheaf} on $(X,\Lambda)$ is $\Lambda$-graded presheaf $\sheaf F$ that such that for each open $U$ of $X$ and each $\lambda \in \Lambda(U)$ the (ordinary) presheaf $(\res{\sheaf F}{U})_\lambda$ is a sheaf.
    We denote by $\catSh{X,\Lambda}$ the full subcategory of $\catPreSh{X,\Lambda}$ consisting of sheaves.
\end{Definition}

\begin{Remark}
    The underlying ungraded presheaf of a graded sheaf $\sheaf F$ need not necessarily be a sheaf.
    For example, one might have two sections $s_1 \in \sheaf F(U_1)_{\lambda_1}$ and $s_2 \in \sheaf F(U_2)_{\lambda_2}$ such that $\res{s_1}{U_1 \cap U_2} = 0 = \res{s_2}{U_1 \cap U_2}$ but $\res{\lambda_1}{U_1 \cap U_2} \ne \res{\lambda_2}{U_1 \cap U_2}$.
    In this case, disregarding the grading one should be able to glue $s_1$ and $s_2$.
    But as $\lambda_1$ and $\lambda_2$ do not glue, one would not be able to assign a grading to the glued section.
\end{Remark}

As in the ungraded setting a morphism of sheaves is an isomorphism if and only if it is on stalks, see~\cite[Proposition~2.2.2]{KashiwaraSchapira:1994:SheavesOnManifolds}.
Similarly, by adding gradings to the standard construction (see~\cite[Proposition~2.2.3]{KashiwaraSchapira:1994:SheavesOnManifolds}) one defines the sheafification functor:

\begin{Lemma}
    The forgetful functor $\catSh{X,\Lambda} \to \catPreSh{X,\Lambda}$ has a left adjoint, called \emph{sheafification}.
    If $\sheaf F$ is a presheaf, then the associated sheaf has the same stalks as $\sheaf F$.
\end{Lemma}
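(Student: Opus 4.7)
The plan is to mimic the classical \enquote{compatible germs} construction of sheafification, keeping track of the grading degree by degree. Given $\sheaf F \in \catPreSh{X,\Lambda}$, for each open $U \subseteq X$ and each $\lambda \in \Lambda(U)$ I would define $\sheaf F^+(U)_\lambda$ to consist of functions $s \colon U \to \bigsqcup_{x \in U} \sheaf F_x$ satisfying $s(x) \in (\sheaf F_x)_{\lambda_x}$, where $\lambda_x$ denotes the germ of $\lambda$ at $x$, together with the local condition that for every $x \in U$ there exists an open neighborhood $V \subseteq U$ of $x$ and a section $t \in \sheaf F(V)_{\res{\lambda}{V}}$ with $s(y) = t_y$ for all $y \in V$. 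Setting $\sheaf F^+(U) := \bigoplus_{\lambda \in \Lambda(U)} \sheaf F^+(U)_\lambda$ with the evident restriction maps gives a graded presheaf, and sending a local section $s \in \sheaf F(U)_\lambda$ to the germ family $(s_x)_{x \in U}$ defines a natural morphism $\sheaf F \to \sheaf F^+$ of graded presheaves.

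The key observation, which I would verify first, is that for any open $U$ and any $\lambda \in \Lambda(U)$ the ordinary presheaf $(\sheaf F^+|_U)_\lambda$ on $U$ is by construction exactly the classical sheafification of the ordinary presheaf $(\sheaf F|_U)_\lambda$. Hence the sheaf axiom for $\sheaf F^+$, as well as the claim that $\sheaf F^+$ has the same stalks as $\sheaf F$, both follow immediately from the classical result \cite[Proposition~2.2.3]{KashiwaraSchapira:1994:SheavesOnManifolds}.

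For the universal property I would, given a graded sheaf $\sheaf G$ and a morphism $\phi \colon \sheaf F \to \sheaf G$, construct the extension $\tilde\phi \colon \sheaf F^+ \to \sheaf G$ degree by degree. For each open $U$ and each $\lambda \in \Lambda(U)$, the ordinary sheafification adjunction applied to the induced map of ordinary presheaves $(\sheaf F|_U)_\lambda \to (\sheaf G|_U)_\lambda$ produces a unique morphism of ordinary sheaves $(\sheaf F^+|_U)_\lambda \to (\sheaf G|_U)_\lambda$ on $U$. Summing over $\lambda \in \Lambda(U)$ and letting $U$ vary assembles these into the desired $\tilde\phi$, and classical uniqueness in each degree forces uniqueness globally.

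The one point requiring mild care---not really an obstacle, but the place where the graded formalism demands slightly more than the ungraded case---is checking that the degree-by-degree extensions really glue into a morphism of $\Lambda$-graded presheaves, i.e.\ that they are compatible with restriction from $U$ to an open $V \subseteq U$ under which the grading element $\lambda$ is replaced by $\res{\lambda}{V}$. This reduces to the classical fact that sheafification commutes with restriction to open subsets, so that the constructions on $U$ and on $V$ agree canonically when compared on $V$. Beyond this bookkeeping, the entire argument is a direct transport of the ungraded proof.
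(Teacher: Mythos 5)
Your degree-by-degree compatible-germs construction is precisely what the paper means by ``adding gradings to the standard construction'' of \cite[Proposition~2.2.3]{KashiwaraSchapira:1994:SheavesOnManifolds}, and the details you supply (sheaf axiom, stalks, and the universal property reduced to the classical adjunction in each degree, with the restriction-compatibility check $\lambda \mapsto \res{\lambda}{V}$) are correct. This matches the paper's (essentially citation-level) proof, just carried out more explicitly.
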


\begin{Definition}
    For $\sheaf F, \sheaf G \in \catSh{X,\Lambda}$ we set
    \[
        \Hom^\Lambda(\sheaf F,\sheaf G) = \bigoplus_{\lambda \in \Lambda(X)} \Hom_{\catSh{X,\, \Lambda}}(\sheaf F,\, \sheaf G\langle \lambda\rangle).
    \]
    This enhances $\cat{Sh}(X,\Lambda)$ to a $\Lambda(X)$-graded category.
    We denote by $\sheafHom(\sheaf F, \sheaf G)$ the $\Lambda$-graded sheaf
    \[
        U \mapsto \Hom^{\res{\Lambda}{U}}\bigl(\res{\sheaf F}{U},\, \res{\sheaf G}{U}\bigr).
    \]
\end{Definition}

\begin{Definition}
    For $\sheaf F, \sheaf G \in \catSh{X,\Lambda}$ denote by $\sheaf F \otimes \sheaf G$ the $\Lambda$-graded sheaf associated to the presheaf
    \[
        U \mapsto \sheaf F(U) \otimes_\ZZ \sheaf G(U).
    \]
\end{Definition}

Let $f\colon X \to Y$ be a continuous map of topological spaces and let $\Lambda$ be a sheaf of abelian groups on $Y$.
Then we get an obvious morphism of graded topological spaces $f\colon (X, f^{-1}\Lambda) \to (Y,\Lambda)$.
The usual functors of sheaves $f^{-1}$ and $f_*$ induce adjoint functors between $\catSh{X, f^{-1}\Lambda}$ and $\catSh{Y,\Lambda}$.

\begin{Definition}
    Let $f\colon (X,\Lambda_X) \to (Y,\Lambda_Y)$ be a morphism of graded topological spaces.
    Define a functor
    \[
        f_{\gr}^{-1}\colon \catSh{Y,\Lambda_Y} \to \catSh{X,\Lambda_X}
    \]
    by
    \[
        \Gamma(U,f_{\gr}^{-1}\sheaf F)_\lambda = \bigl\langle s \in \Gamma(U,f^{-1}\sheaf F) : f^\flat(\deg s) = \lambda\bigr\rangle, \quad \lambda \in \Lambda_X(U).
    \]
    \label{def:graded-sheaf-pushforward}%
    Also define a functor
    \[
        f_{\gr,*}\colon \catSh{X,\Lambda_X} \to \catSh{Y,\Lambda_Y}
    \]
    by
    \[
        \Gamma(V,f_{\gr,*}\sheaf F)_\mu = \Gamma(f^{-1}V,\, \sheaf F)_{f^\flat(\mu)},\quad \mu \in \Lambda_Y(V) \to f^{-1}\Lambda_Y(f^{-1}V).
    \]
\end{Definition}

One checks that these definition indeed make sense, i.e.~send graded sheaves to graded sheaves.
We note that if $p\colon (X,\Lambda_X) \to (\pt, 0)$, then $p_{\gr,*}$ is the \enquote{degree $0$ global sections} functor.
In particular we have $p_{\gr,*}\sheafHom_{\catSh{X,\Lambda_X}}(\sheaf F, \sheaf G) = \Hom_{\catSh{X,\Lambda_X}}(\sheaf F, \sheaf G)$.

\begin{Remark}\label{rem:graded_pushforward_finitness}
    The pushforward functor will in general not keep finiteness properties of the sheaf $\sheaf F$.
    A good example to keep in mind is $Y = \RR$ with $\Lambda$ constructible such that $\res{\Lambda}{\RR^*} = 0$ and $\Lambda_0 = \ZZ$.
    Then the graded pushforward along $j\colon \RR^* \hookrightarrow \RR$ sends the constant sheaf with fiber $k$ to the sheaf with stalk at $0$ equal to $\bigoplus_{\mu \in \ZZ} k$ (and constant with fiber $k$ otherwise).
\end{Remark}

\begin{Lemma}\label{lem:sheaf-adjunction}
    Let $f\colon X \to Y$ be a morphism of graded topological spaces.
    Then for $\sheaf F \in \catSh{X,\Lambda_X}$ and $\sheaf G \in \catSh{Y,\Lambda_Y}$ there exists a natural isomorphism
    \[
        \sheafHom_{\catSh{Y,\Lambda_Y}}(\sheaf G, f_{\gr,*}\sheaf F) \cong f_{\gr,*}\sheafHom_{\catSh{X,\Lambda_X}}(f_{\gr}^{-1}\sheaf G, \sheaf F).
    \]
    In particular,
    \[
        \Hom_{\catSh{Y,\Lambda_Y}}(\sheaf G, f_{\gr,*}\sheaf F) \cong \Hom_{\catSh{X,\Lambda_X})}(f_{\gr}^{-1}\sheaf G, \sheaf F)
    \]
    and $f_{\gr}^{-1}$ is left adjoint to $f_{\gr,*}$.
\end{Lemma}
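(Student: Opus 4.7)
The plan is to reduce everything to the standard ungraded adjunction $\Hom(\sheaf G, f_*\sheaf F) \cong \Hom(f^{-1}\sheaf G, \sheaf F)$ for ordinary sheaves on $X$ and $Y$, and verify that the degree constraints imposed by the graded structures on both sides are exchanged under this bijection. I will first establish the $\Hom$-adjunction at degree zero, then pass to $\Hom^\Lambda$ via the shift functor, and finally localize to obtain the $\sheafHom$-statement by running the same argument on each open $V \subseteq Y$ and checking naturality in $V$.

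To set this up, note that $f_{\gr,*}\sheaf F$ is a $\Lambda_Y$-graded subsheaf of the ordinary pushforward $f_*\sheaf F$: its degree-$\mu$ component on an open $V$ is exactly the piece $\sheaf F(f^{-1}V)_{f^\flat(\mu|_{f^{-1}V})}$. Dually, the underlying ungraded sheaf of $f_{\gr}^{-1}\sheaf G$ is the ordinary $f^{-1}\sheaf G$, with its $\Lambda_X$-grading obtained by pushing the tautological $f^{-1}\Lambda_Y$-grading through $f^\flat$. Thus a degree-zero morphism $\phi\colon \sheaf G \to f_{\gr,*}\sheaf F$ is precisely an ordinary morphism $\sheaf G \to f_*\sheaf F$ satisfying $\phi(\sheaf G(V)_\lambda) \subseteq \sheaf F(f^{-1}V)_{f^\flat(\lambda|_{f^{-1}V})}$ for every open $V \subseteq Y$ and every $\lambda \in \Lambda_Y(V)$.

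Applying the ordinary adjunction, such a $\phi$ corresponds to a morphism $\psi\colon f^{-1}\sheaf G \to \sheaf F$ of ungraded sheaves on $X$. Tracing a homogeneous section $s \in \sheaf G(V)_\lambda$ through the unit $\sheaf G \to f_*f^{-1}\sheaf G$, the grading constraint on $\phi$ translates into the requirement that $\psi$ send sections of $f^{-1}\sheaf G$ of $f^{-1}\Lambda_Y$-degree $\nu$ into sections of $\sheaf F$ of $\Lambda_X$-degree $f^\flat(\nu)$. By the very definition of $f_{\gr}^{-1}\sheaf G$, this is exactly the condition that $\psi$ be a degree-zero morphism $f_{\gr}^{-1}\sheaf G \to \sheaf F$ of graded sheaves, and the construction is clearly reversible. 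Replacing $\sheaf G$ by the shifted sheaf $\sheaf G\langle \lambda\rangle$ for $\lambda \in \Lambda_Y(X)$ extends this to the full graded $\Hom^{\Lambda_Y(X)}$, so the displayed $\Hom$-adjunction (and hence the adjointness $f_{\gr}^{-1} \dashv f_{\gr,*}$) is established.

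For the $\sheafHom$-version, apply the preceding argument to $(\sheaf G|_V, \sheaf F|_{f^{-1}V})$ for each open $V \subseteq Y$: the degree-$\mu$ component of the left-hand side, $\mu \in \Lambda_Y(V)$, is identified with the degree-$f^\flat(\mu|_{f^{-1}V})$ component of $\sheafHom(f_{\gr}^{-1}\sheaf G|_{f^{-1}V}, \sheaf F|_{f^{-1}V})$ on $f^{-1}V$, which is exactly the degree-$\mu$ component of the right-hand side by the definition of $f_{\gr,*}$. The main obstacle is purely bookkeeping: one must check that the resulting bijection is natural with respect to restriction along open inclusions $V' \subseteq V$, which boils down to the compatibility of $f^\flat$ with restriction and to the commutativity of the usual squares relating $f^{-1}$ with restriction. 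None of this is difficult, but it requires a careful accounting of how $\Lambda_Y$-degrees are transported to $\Lambda_X$-degrees as one moves between $V$ and $f^{-1}V$. The $\Hom$-adjunction is then recovered by applying $p_{\gr,*}$ for $p\colon Y \to \pt$.
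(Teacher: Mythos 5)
Your overall strategy (reduce to the classical adjunction and keep track of degrees) is the right one, but two of the structural claims on which your reduction rests are false or do not typecheck, and they sit exactly where the graded content of the lemma lies. First, $f_{\gr,*}\sheaf F$ is in general \emph{not} a graded subsheaf of the ordinary pushforward $f_*\sheaf F$: by definition $\Gamma(V,f_{\gr,*}\sheaf F)_\mu = \Gamma(f^{-1}V,\sheaf F)_{f^\flat(\mu)}$, so whenever $f^\flat$ is not injective two distinct degrees $\mu_1\neq\mu_2$ with $f^\flat(\mu_1)=f^\flat(\mu_2)$ contribute two separate copies of the same module, and the comparison map to $f_*\sheaf F$ has a kernel; Remark~\ref{rem:graded_pushforward_finitness} (pushforward along $\RR^*\hookrightarrow\RR$ with $\Lambda_0=\ZZ$, producing the stalk $\bigoplus_{\mu\in\ZZ}k$ at $0$) is a counterexample. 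The characterization of degree-zero morphisms $\sheaf G\to f_{\gr,*}\sheaf F$ that you extract happens to survive, because the degree of the source section pins down which copy is hit, but the justification as written is wrong. Second, and more seriously, you invoke \enquote{the ordinary adjunction} for the underlying ungraded objects, yet the underlying ungraded presheaf of a graded sheaf need not be a sheaf (the paper makes this point explicitly right after the definition of graded sheaves; it need not even be separated), so the classical adjunction between $\catSh{X}$ and $\catSh{Y}$ simply does not apply to these objects. Relatedly, the step you dismiss as bookkeeping -- transporting the degree constraint from sections in the image of the unit to arbitrary homogeneous local sections of $f^{-1}\sheaf G$ -- is a genuine issue: a general section of $f^{-1}\sheaf G$ is only \emph{locally} pulled back from $Y$, and homogeneity of its image cannot be checked on a cover, since distinct degrees in $\Lambda_X(U)$ may become equal after restriction, so \enquote{locally homogeneous of the expected degree} does not formally imply \enquote{homogeneous}.

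The paper's proof avoids both problems by factoring $f$ as $(X,\Lambda_X)\to(X,f^{-1}\Lambda_Y)\to(Y,\Lambda_Y)$. For the second, strict, morphism the classical adjunction is applied degreewise to the graded pieces, where the grading pulls back tautologically and no underlying ungraded object is ever formed; for the first morphism the underlying map is the identity, so both $\sheafHom$'s live on the same space and the statement reduces to comparing degree conditions on local sections, with no pullback (and hence no local-to-global homogeneity issue) involved. If you restructure your argument along these lines -- equivalently, replace your use of the ungraded adjunction by the graded adjunction between $\catSh{X,f^{-1}\Lambda_Y}$ and $\catSh{Y,\Lambda_Y}$ already noted in the paper, and then treat the regrading along $f^\flat$ as a separate, same-space step -- your degree bookkeeping becomes correct and the remaining verifications really are routine.
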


\begin{proof}
    If $\Lambda_X = f^{-1}\Lambda_Y$, then this follows easily from the classical adjointness of pullback and pushforward.
    Thus we can assume that the underlying map of topological spaces is the identity.
    In this case one checks that a morphism of ungraded sheaves is contained in either $\sheafHom$ if it fulfills the same degree conditions on local sections.
\end{proof}

The functor $f_{\gr}^{-1}$ is clearly exact, whence $f_{\gr,*}$ is left exact by Lemma~\ref{lem:sheaf-adjunction}.

\begin{Definition}
    Let $f\colon X \to Y$ be a morphism of graded spaces and $\sheaf F \in \catSh{X,\Lambda}$.
    We define $f_{\gr,!}\sheaf F$ to be the subsheaf of $f_{\gr,*}\sheaf F$ with sections
    \[
        \Gamma(U, f_{\gr,!}\sheaf F)_\mu = \bigl\{ s \in \Gamma(f^{-1}U, \sheaf F)_{f^\flat(\mu)} : f\colon \supp s \to U \text{ is proper}\bigr\}.
    \]
    We write $\Gamma_c(X,\sheaf F)$ for $p_{\gr,!}\sheaf F$ with $p\colon (X,\Lambda) \to (\pt, \Lambda(X))$.
\end{Definition}

Clearly $f_{\gr,!}$ is left exact and $f_{\gr,*} = f_{\gr,!}$ when $f$ is proper.

\begin{Remark}
    Let $f\colon X \to Y$ be a morphism of graded spaces and $\sheaf F \in \catSh{X, \Lambda_X}$.
    Then in general $\Gamma(Y,\, f_{\gr,*}\sheaf F)$ is not equal to $\Gamma(X,\, \sheaf F)$.
    For an extreme example consider $f\colon X \to (\pt,0)$, where the former is degree $0$ sections, while the latter are all ($\Lambda(X)$-graded) sections.
    The same remark applies to $\Gamma_c$ and $f_{\gr,!}$.
\end{Remark}

For any subset $i\colon Y \hookrightarrow X$ we set $\res{\sheaf F}{Y} = i_{\gr}^{-1}\sheaf F$, where we endow $Y$ with the grading $i^{-1}\Lambda$.

\begin{Lemma}\label{lem:graded_baby_basechange}
    Let $f\colon X \to Y$ be a morphism of graded spaces and let $\sheaf F \in \catSh{X,\Lambda_X}$.
    Factor $f$ as
    \[
        (X, \Lambda_X) \xrightarrow{f_1} (X,f^{-1}\Lambda_Y) \xrightarrow{f_2} (Y, \Lambda_Y).
    \]
    Then for each $y \in Y$ there exists a canonical isomorphism of $\Lambda_y$-graded modules
    \[
        (f_{\gr,!} \sheaf F)_y \isoto \Gamma_c\biggl(f^{-1}(y),\, (\res{f_1}{f^{-1}(y)})_{\gr,*}\res{\sheaf F}{f^{-1}(y)}\biggr).
    \]
    Here we endow $f^{-1}(y)$ with the sheaf of gradings $f^{-1}\Lambda_{Y,y}$.
\end{Lemma}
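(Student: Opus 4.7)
The plan is to reduce the claim to classical proper base change via the given factorization $f = f_2 \circ f_1$: the strict morphism $f_2$ carries the topological geometry, behaving componentwise like an ungraded $f_!$, while the identity-on-spaces morphism $f_1$ only rearranges gradings. First, I would check the identity $f_{\gr,!} = f_{2,\gr,!} \circ f_{1,\gr,*}$ directly from the definitions: for $\mu \in \Lambda_Y(V)$,
\[
    \Gamma(V,\, f_{2,\gr,!}(f_{1,\gr,*}\sheaf F))_\mu = \bigl\{s \in \Gamma(f^{-1}V,\, f_{1,\gr,*}\sheaf F)_\mu : \supp s \to V \text{ proper}\bigr\},
\]
and since $\Gamma(f^{-1}V,\, f_{1,\gr,*}\sheaf F)_\mu = \Gamma(f^{-1}V,\, \sheaf F)_{f^\flat(\mu)}$, this recovers exactly $\Gamma(V,\, f_{\gr,!}\sheaf F)_\mu$. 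Thus it suffices to compute the stalk at $y$ of $f_{2,\gr,!}\sheaf G$, where $\sheaf G := f_{1,\gr,*}\sheaf F \in \catSh{X,\, f^{-1}\Lambda_Y}$.

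Next, because $f_2$ is strict, for each local $\mu \in \Lambda_Y(V)$ the ungraded degree-$\mu$ component $(f_{2,\gr,!}\sheaf G)_\mu$ on $V$ coincides with the classical $f_!(\sheaf G_\mu)$ applied to the ungraded degree-$\mu$ part of $\sheaf G$ on $f^{-1}V$. Taking stalks at $y$ and invoking the standard proper base change for sheaves (\cite[Proposition~2.5.2]{KashiwaraSchapira:1994:SheavesOnManifolds}) then gives, for each $\mu \in \Lambda_{Y,y}$,
\[
    (f_{\gr,!}\sheaf F)_{y,\mu} \cong \Gamma_c\!\bigl(f^{-1}(y),\, \sheaf G_\mu|_{f^{-1}(y)}\bigr).
\]

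Finally, I would identify $\sheaf G_\mu|_{f^{-1}(y)}$ with the degree-$\mu$ component of the graded sheaf $(f_1|_{f^{-1}(y)})_{\gr,*}(\sheaf F|_{f^{-1}(y)})$ on the fiber. Because $f_1$ is the identity on topological spaces, this identification is formal: at every $x \in f^{-1}(y)$ both sides pick out the degree-$f^\flat_x(\mu)$ part of the stalk $\sheaf F_x$, and by the stalk-isomorphism criterion the two sheaves are equal. Assembling the component-wise isomorphisms over $\mu \in \Lambda_{Y,y}$ then yields the desired isomorphism of $\Lambda_{Y,y}$-graded modules. The main obstacle I anticipate is the bookkeeping in this last step: one must verify that the component-wise identifications on $f^{-1}(y)$ assemble coherently into the graded sheaf on the fiber, and that the decomposition of $\Gamma_c$ into homogeneous parts used above is compatible with the graded $\Gamma_c$ appearing in the statement. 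All of this is routine, but it requires tracking gradings through several colimits, pullbacks, and supports simultaneously.
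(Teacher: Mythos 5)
Your proposal is correct and follows essentially the same route as the paper: both reduce to the classical statement $(f_!\sheaf H)_y \cong \Gamma_c(f^{-1}(y), \res{\sheaf H}{f^{-1}(y)})$ of \cite[Proposition~2.5.2]{KashiwaraSchapira:1994:SheavesOnManifolds}, applied degreewise after checking that the gradings match through the factorization $f = f_2 \circ f_1$. Your explicit observation that $f_{\gr,!} = f_{2,\gr,!} \circ f_{1,\gr,*}$ and the componentwise bookkeeping are exactly the "one easily checks the morphism respects the gradings" step that the paper leaves implicit.
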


\begin{proof}
    One easily checks that the above morphism respects the gradings.
    The fact that it is an isomorphism can then be checked in the usual way, see~\cite[Proposition~2.5.2]{KashiwaraSchapira:1994:SheavesOnManifolds} or \cite[Theorem~VII.1.4]{Iversen:1986:CohomologyOfSheaves}.
\end{proof}

\begin{Lemma}\label{lem:graded_fiber_product}
    The category of graded spaces admits pullbacks. 
    Concretely, if $f\colon (Y_1, \Lambda_{Y_1}) \to (X, \Lambda_X)$ and $g\colon (Y_2,\Lambda_{Y_2}) \to (X,\Lambda_X)$ are two morphisms of graded spaces, then their pullback is isomorphic to $(Z,\Lambda_Z)$ as follows:
    The underlying topological space $Z$ is the cartesian product $Y_1 \times_{X} Y_2$.
    Let $\tilde f\colon Z \to Y_2$ and $\tilde g\colon Z \to Y_1$ be the projection maps.
    The sheaf of abelian groups $\Lambda_Z$ is the pushout of $\tilde g^{-1}(f^\flat)\colon \tilde g^{-1}f^{-1} \Lambda_X \to \tilde g^{-1} \Lambda_{Y_1}$ and $\tilde f^{-1}(g^\flat)\colon \tilde f^{-1}g^{-1} \Lambda_X \to \tilde f^{-1} \Lambda_{Y_2}$:
    \[
        \begin{tikzcd}
            (Y_1 \times_{X} Y_2,\, \tilde g^{-1} \Lambda_{Y_1} \oplus_{(\tilde g \circ f)^{-1} \Lambda_X} \tilde f^{-1} \Lambda_{Y_2}) \arrow[r, "\tilde{g}"] \arrow[d, "\tilde f"] & (Y_1,\,\Lambda_{Y_1}) \arrow[d,"f"] \\
            (Y_2,\, \Lambda_{Y_2}) \arrow[r,"g"] & (X,\, \Lambda_X)
        \end{tikzcd}
    \]
\end{Lemma}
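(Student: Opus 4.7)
My plan is to verify the explicit description directly against the universal property of a pullback, reducing everything to (i) the universal property of the topological fiber product $Y_1 \times_X Y_2$ and (ii) the universal property of a pushout in the category of sheaves of abelian groups on $Z$.

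First I would check that the candidate square commutes in the category of graded spaces. Commutativity on underlying topological spaces is immediate, so the content is to see that $f \circ \tilde g = g \circ \tilde f$ as morphisms of graded spaces. Since these agree as continuous maps, the sheaves $\tilde g^{-1} f^{-1} \Lambda_X$ and $\tilde f^{-1} g^{-1} \Lambda_X$ are canonically identified, and the two coequalizing maps into $\Lambda_Z$ produced by the pushout description are, by construction, equal. This gives the desired sheaf-level equality on $(f \circ \tilde g)^{-1} \Lambda_X$.

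Next I would verify the universal property. Suppose we are given a graded space $(W, \Lambda_W)$ together with morphisms $h_1 \colon (W,\Lambda_W) \to (Y_1,\Lambda_{Y_1})$ and $h_2 \colon (W,\Lambda_W) \to (Y_2,\Lambda_{Y_2})$ satisfying $f \circ h_1 = g \circ h_2$. The topological fiber product produces a unique continuous map $h \colon W \to Z$ factoring $h_1$ and $h_2$. To construct $h^\flat \colon h^{-1}\Lambda_Z \to \Lambda_W$, I would use that $h^{-1}$ is a left adjoint (to $h_*$) and therefore preserves colimits; in particular $h^{-1}\Lambda_Z$ is naturally the pushout of $h^{-1}\tilde g^{-1}(f^\flat)$ against $h^{-1}\tilde f^{-1}(g^\flat)$. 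The morphisms $h_1^\flat$ and $h_2^\flat$ provide maps from the two terms of this pushout into $\Lambda_W$, and the hypothesis $f \circ h_1 = g \circ h_2$ in graded spaces is exactly the condition that makes these maps agree on the common source. The universal property of the pushout then yields a unique $h^\flat$ with the required compatibilities.

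The main (though essentially formal) obstacle will be the bookkeeping: identifying $\tilde g^{-1} f^{-1} \Lambda_X$ with $\tilde f^{-1} g^{-1} \Lambda_X$ naturally, and verifying that $h^{-1}$ sends the pushout defining $\Lambda_Z$ to the pushout computing $h^{-1}\Lambda_Z$. Both are standard consequences of the adjunction $(h^{-1}, h_*)$ and of the equality $f \circ \tilde g = g \circ \tilde f$ on topological spaces, so once they are in place the remaining argument is a routine diagram chase producing the unique factorization $(W, \Lambda_W) \to (Z, \Lambda_Z)$.
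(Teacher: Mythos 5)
Your proposal is correct and matches the paper's argument, which simply says the statement ``follows directly from the universal properties'': you verify commutativity of the candidate square and then check the universal property by combining the universal property of the topological fiber product with that of the pushout of sheaves (using that $h^{-1}$, being a left adjoint, preserves the pushout defining $\Lambda_Z$). You merely spell out the bookkeeping that the paper leaves implicit, so no gap.
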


\begin{proof}
    Follows directly from the universal properties.
\end{proof}

\begin{Proposition}\label{prop:underived_base_change}
    Consider a cartesian square
    \[
        \begin{tikzcd}
            Z \arrow[r, "\tilde g"] \arrow[d,"\tilde f"] & Y_1 \arrow[d, "f"] \\
            Y_2 \arrow[r, "g"] & X
        \end{tikzcd}
    \]
    of graded spaces.
    Then there is a canonical isomorphism of functors
    \[
        g_{\gr}^{-1} \circ f_{\gr,!} \isoto \tilde f_{\gr,!} \circ \tilde g_{\gr}^{-1}.
    \]
\end{Proposition}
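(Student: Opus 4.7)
The plan is to construct a base change morphism and then verify it is an isomorphism on stalks, where the degree matching reduces to the universal property of the pushout defining $\Lambda_Z$.

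First I would construct the natural transformation $g_{\gr}^{-1} \circ f_{\gr,!} \to \tilde f_{\gr,!} \circ \tilde g_{\gr}^{-1}$ by restricting the classical base change isomorphism $g^{-1} f_! \sheaf F \isoto \tilde f_! \tilde g^{-1} \sheaf F$ along the degree filtrations cutting out the two graded pushforwards. Unwinding definitions, a local section of $g_{\gr}^{-1} f_{\gr,!}\sheaf F$ of degree $\mu \in \Lambda_{Y_2}$ corresponds to a section of $\sheaf F$ with proper support over the base whose $\Lambda_X$-degree $\nu$ satisfies $g^\flat(\nu) = \mu$ and whose $\Lambda_{Y_1}$-degree equals $f^\flat(\nu)$. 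A local section of $\tilde f_{\gr,!} \tilde g_{\gr}^{-1}\sheaf F$ of degree $\mu$ corresponds to a section of $\sheaf F$ whose $\Lambda_Z$-degree equals $\tilde f^\flat(\mu)$ and is the $\tilde g^\flat$-image of its $\Lambda_{Y_1}$-degree. By the pushout description of $\Lambda_Z$ in Lemma~\ref{lem:graded_fiber_product}, these two conditions are equivalent, yielding the desired morphism.

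Next I would check that this morphism is an isomorphism on stalks. Fix $y \in Y_2$ and set $x = g(y)$. By Lemma~\ref{lem:graded_baby_basechange} applied to $f$, the stalk $(f_{\gr,!}\sheaf F)_x$ is a $\Lambda_{X,x}$-graded module canonically isomorphic to a $\Gamma_c$ computation over $f^{-1}(x)$; pulling back via $g_{\gr}^{-1}$ reinterprets this as a $\Lambda_{Y_2,y}$-graded module through $g^\flat_y\colon \Lambda_{X,x} \to \Lambda_{Y_2,y}$. Applying Lemma~\ref{lem:graded_baby_basechange} to $\tilde f$ similarly identifies $(\tilde f_{\gr,!} \tilde g_{\gr}^{-1}\sheaf F)_y$ with a $\Gamma_c$ computation over $\tilde f^{-1}(y)$, graded by $\Lambda_{Y_2,y}$. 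Since $\tilde g$ restricts to a homeomorphism $\tilde f^{-1}(y) \isoto f^{-1}(x)$, the two underlying abelian groups agree via classical proper base change.

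The main obstacle is matching the two $\Lambda_{Y_2,y}$-gradings on this common abelian group. Restricted to the fiber $\tilde f^{-1}(y)$, the sheaves $\tilde f^{-1}\Lambda_{Y_2}$ and $(f \circ \tilde g)^{-1}\Lambda_X$ become constant sheaves with values $\Lambda_{Y_2,y}$ and $\Lambda_{X,x}$, so the pushout description of $\Lambda_Z$ specializes to a pushout of sheaves on $\tilde f^{-1}(y) \cong f^{-1}(x)$. A diagram chase through this pushout square, using that $g^\flat$ and $\tilde f^\flat$ are the structure maps in one direction while $f^\flat$ and $\tilde g^\flat$ are in the other, identifies the $\Lambda_{Y_2,y}$-degree of a homogeneous local section computed through $\tilde g_{\gr}^{-1} \circ \tilde f_{\gr,!}$ with the one computed through $g_{\gr}^{-1} \circ f_{\gr,!}$. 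Once this pushout bookkeeping is in place, the classical isomorphism of underlying abelian groups is automatically graded, completing the proof.
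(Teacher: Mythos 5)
Your proposal is correct and takes essentially the same route as the paper: the paper's proof likewise reduces to the classical argument of Kashiwara--Schapira (Proposition~2.5.11) checked on stalks via Lemma~\ref{lem:graded_baby_basechange}, with the grading bookkeeping handled by the pushout description of $\Lambda_Z$ from Lemma~\ref{lem:graded_fiber_product}. Your fiberwise identification of the two $\Lambda_{Y_2,y}$-gradings through that pushout is exactly the careful tracking of gradings the paper alludes to.
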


\begin{proof}
    Using Lemma~\ref{lem:graded_baby_basechange}, this can be shown as in the ungraded situation while carefully keeping track of gradings using Lemma~\ref{lem:graded_fiber_product}, see~\cite[Proposition~2.5.11]{KashiwaraSchapira:1994:SheavesOnManifolds}.
\end{proof}

\section{Ringed graded topological spaces}\label{sec:ringed}

Let $(X,\Lambda)$ be a graded topological space.
A sheaf of rings\footnote{Recall that by \enquote{ring} we always mean a commutative ring with unit.} (resp.~$k$-algebras) on $X$ is a $\Lambda$-graded sheaf $\sheaf R$ such that each $\sheaf R(U)$ is a $\Lambda(U)$-graded ring (resp.~a $\Lambda(U)$-graded $k$-algebra) and the restriction maps are ring homomorphisms (resp.~$k$-algebra homomorphisms).

\begin{Definition}
    A \emph{graded ringed topological spaces} is a triple $(X, \Lambda_X, \sheaf R_X)$, where $(X,\Lambda_X)$ is a graded topological space and $\sheaf R_X$ is a $\Lambda_X$-graded sheaf of commutative rings on $X$.
    A morphism of graded ringed topological spaces $(X,\Lambda_X,\sheaf R_X) \to (Y,\Lambda_Y,\sheaf R_Y)$ is a triple $(f,f^\flat,f^\sharp)$ where $(f,f^\flat)$ is a morphism of graded topological spaces and $f^\sharp\colon f_{\gr}^{-1}\sheaf R_Y \to \sheaf R_X$ is morphism of $\Lambda_X$-graded sheaves of rings.
    Such a morphism is called \emph{strict} if $f^\flat$ and $f^\sharp$ are isomorphisms.
\end{Definition}

\begin{Definition}
    Let $(X,\Lambda,\sheaf R)$ be a graded ringed topological space.
    We write $\catSh{X,\Lambda,\sheaf R}$ for the category of $\Lambda$-graded sheaves of $\sheaf R$-modules, i.e.~the category whose objects are $\Lambda$-graded sheaves $\sheaf F$ such that each $\sheaf F(U)$ is a $\Lambda(U)$-graded $\sheaf R(U)$-module with compatible restriction maps and morphisms are required to respect this additional structure.
\end{Definition}

Let $\sheaf F$ and $\sheaf G$ be two $\sheaf R$-modules.
Then $\Hom^\Lambda_{\catSh{X,\Lambda,\sheaf R}}(\sheaf F,\sheaf G)$, $\sheaf F \otimes_{\sheaf R} \sheaf G$, and $\sheafHom_{\catSh{X,\Lambda,\sheaf R}}(\sheaf F,\sheaf G)$ are defined in the obvious way.
We will often simply write $\Hom_{\sheaf R}$, $\Hom_{\sheaf R}^\Lambda$ and $\sheafHom_{\sheaf R}$ for the various Hom functors.

\begin{Lemma}\label{lem:module-tensor-hom-adjunction}
    Let $\sheaf R \to \sheaf S$ be a morphism of $\Lambda$-graded sheaves of commutative rings.
    Let $\sheaf F$ and $\sheaf H$ be $\sheaf S$-modules and $\sheaf G$ an $\sheaf R$-module.
    Then there is a canonical isomorphism
    \[
        \sheafHom_{\catSh{X,\Lambda,\sheaf S}}(\sheaf F \otimes_{\sheaf R} \sheaf G,\, \sheaf H)
        \cong
        \sheafHom_{\catSh{X,\Lambda,\sheaf R}}(\sheaf G,\, \sheafHom_{\catSh{X,\Lambda,\sheaf S}}(\sheaf F, \sheaf H)).
    \]
\end{Lemma}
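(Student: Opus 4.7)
The plan is to reduce the statement to the usual tensor--hom adjunction for graded modules over a graded commutative ring, done naturally in the open set. By definition of $\sheafHom$, producing the claimed isomorphism of $\Lambda$-graded sheaves amounts to producing, naturally in $U \subseteq X$, an isomorphism of $\Lambda(U)$-graded abelian groups
\[
    \Hom_{\res{\sheaf S}{U}}^{\res\Lambda U}\bigl(\res{(\sheaf F \otimes_{\sheaf R} \sheaf G)}{U},\, \res{\sheaf H}{U}\bigr) \cong \Hom_{\res{\sheaf R}{U}}^{\res\Lambda U}\bigl(\res{\sheaf G}{U},\, \sheafHom_{\res{\sheaf S}{U}}(\res{\sheaf F}{U}, \res{\sheaf H}{U})\bigr).
\]
After restricting all sheaves to $U$ one is reduced to the case $U = X$; the resulting map of presheaves of Hom-groups is the datum needed to build the morphism of internal Hom-sheaves.

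The forward map is the standard currying: given a degree-$\lambda$ map of $\sheaf S$-modules $\phi\colon \sheaf F \otimes_{\sheaf R} \sheaf G \to \sheaf H$, I set $\psi_V(g)(f) := \phi_V(f \otimes g)$ for each open $V$ and each pair of local sections $f \in \sheaf F(V)$, $g \in \sheaf G(V)$. Everything interesting happens in the degree-bookkeeping: if $f$ and $g$ are homogeneous of degrees $\nu$ and $\mu$ respectively, then $f \otimes g$ has degree $\mu + \nu$, so $\phi_V(f \otimes g)$ has degree $\mu + \nu + \lambda$, meaning $\psi_V(g)$ is a local $\sheaf S$-linear map $\res{\sheaf F}{V} \to \res{\sheaf H}{V}$ of degree $\mu + \lambda$, i.e.~a homogeneous section of $\sheafHom_{\sheaf S}(\sheaf F,\sheaf H)$ of degree $\mu + \lambda$. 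Consequently $\psi$ itself sends degree-$\mu$ sections of $\sheaf G$ to degree-$(\mu+\lambda)$ sections of the inner $\sheafHom$, hence is a morphism of graded $\sheaf R$-modules of degree $\lambda$, as required.

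For the inverse I would use the universal property of the tensor product together with the sheafification lemma stated earlier: since $\sheaf F \otimes_{\sheaf R} \sheaf G$ is the sheafification of the presheaf $V \mapsto \sheaf F(V) \otimes_{\sheaf R(V)} \sheaf G(V)$, and $\sheaf H$ is already a sheaf, a degree-$\lambda$ morphism of $\sheaf S$-modules out of the tensor product corresponds to a compatible system of $\sheaf R(V)$-bilinear, homogeneous-of-degree-$\lambda$ maps $\sheaf F(V) \times \sheaf G(V) \to \sheaf H(V)$. Given $\psi$ of degree $\lambda$ on the right-hand side, the assignment $(f,g) \mapsto \psi_V(g)(f)$ is exactly such a bilinear system, producing the desired pre-image. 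That the two constructions are mutually inverse is then a direct unwinding.

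The only real obstacle is the bookkeeping needed to confirm that each step -- forming the graded tensor presheaf, sheafifying it, restricting to opens, and the degree-matching in the currying -- is compatible with the $\Lambda$-grading and its restriction maps. These compatibilities are either tautological from the definitions in the preceding sections or follow from the fact that sheafification is a left adjoint preserving stalks, so no genuine new difficulty appears beyond carefully tracking degrees.
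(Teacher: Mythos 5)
Your proposal is correct and follows essentially the same route as the paper: reduce to the presheaves defining the sheaves involved (using that the tensor product is the sheafification of the presheaf tensor and that $\sheaf H$, resp.\ the inner $\sheafHom$, is already a sheaf) and then invoke the tensor--hom adjunction for graded modules, with the degree bookkeeping you spell out. The paper's proof is just this reduction stated as a citation, so your write-up simply makes the same argument explicit.
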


\begin{proof}
    As in the ungraded setting, it suffices to check the isomorphism on presheaves defining the above sheaves, see~\cite[Proposition~2.2.9]{KashiwaraSchapira:1994:SheavesOnManifolds}.
    There it follows from the corresponding adjunction for graded modules.
\end{proof}

\begin{Lemma}\label{lem:inverse_and_tensor}
    Let $f\colon (X,\Lambda_X) \to (Y,\Lambda_Y)$ be a morphism of graded topological spaces and let $\sheaf R$ be a $\Lambda_Y$-graded sheaf of rings on $Y$.
    Then for any $\sheaf R$-modules $\sheaf F$ and $\sheaf G$ there exists a canonical isomorphism
    \[
        f_{\gr}^{-1}\sheaf F \otimes_{f_{\gr}^{-1}\sheaf R} f_{\gr}^{-1}\sheaf G \cong f_{\gr}^{-1}(\sheaf F \otimes_{\sheaf R} \sheaf G).
    \]
\end{Lemma}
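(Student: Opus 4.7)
The plan is to piggyback on the classical ungraded isomorphism $f^{-1}\sheaf F \otimes_{f^{-1}\sheaf R} f^{-1}\sheaf G \isoto f^{-1}(\sheaf F \otimes_{\sheaf R} \sheaf G)$ and then verify that it refines to an isomorphism of $\Lambda_X$-graded sheaves. The crucial observation is that $f_{\gr}^{-1}$ leaves the underlying sheaf of abelian groups of $f^{-1}$ unchanged and only regrades it via $f^\flat$: explicitly, for $x \in X$ the stalk $(f_{\gr}^{-1}\sheaf F)_x$ is $\sheaf F_{f(x)}$ as a $\ZZ$-module, with $\lambda$-graded piece $\bigoplus_{\mu \in (f^\flat_x)^{-1}(\lambda)} (\sheaf F_{f(x)})_\mu$ for each $\lambda \in \Lambda_{X,x}$, and similarly for $\sheaf G$ and $\sheaf R$. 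In particular both sides of the claimed isomorphism have the same underlying ungraded sheaf.

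I would construct the canonical map by adapting the classical construction: define it at the presheaf level via the obvious $f_{\gr}^{-1}\sheaf R$-balanced bilinear pairing into (the presheaf computing) $f_{\gr}^{-1}(\sheaf F \otimes_{\sheaf R} \sheaf G)$, and then sheafify. To check that this is actually a morphism of $\Lambda_X$-graded sheaves it suffices to test it on homogeneous simple tensors: if $s, t$ come from sections of degrees $\mu_s, \mu_t \in \Lambda_Y$, then they have degrees $f^\flat(\mu_s)$ and $f^\flat(\mu_t)$ in the regrading, so their tensor has degree $f^\flat(\mu_s) + f^\flat(\mu_t)$ on the left-hand side; on the right-hand side $s \otimes t$ has degree $\mu_s + \mu_t$ on $Y$, which becomes $f^\flat(\mu_s + \mu_t)$ after regrading. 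These agree because $f^\flat$ is a homomorphism of sheaves of abelian groups.

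Finally I would check that the map is an isomorphism on stalks. At each $x \in X$ both sides have underlying $\ZZ$-module $\sheaf F_{f(x)} \otimes_{\sheaf R_{f(x)}} \sheaf G_{f(x)}$, by the preservation of stalks under inverse image, tensor product, and sheafification, and the comparison map is the identity; the grading comparison on stalks is exactly the stalkwise version of the computation above, using the homomorphism property of $f^\flat_x$. Thus the map is a $\Lambda_{X,x}$-graded isomorphism at every stalk, hence a $\Lambda_X$-graded isomorphism of sheaves. The only real obstacle is bookkeeping — making sure the comparison map is genuinely well-defined on the sheafified tensor product on the right rather than merely on its defining presheaf — which is handled by the standard universal-property argument, now applied in the graded setting; no substantive new idea beyond the homomorphism property of $f^\flat$ enters.
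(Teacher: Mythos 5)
Your argument is correct and is essentially the paper's own proof: the paper simply says the isomorphism is obtained ``as in the ungraded setting'' by adapting \cite[Proposition~2.3.5]{KashiwaraSchapira:1994:SheavesOnManifolds}, which is exactly your strategy of transporting the classical comparison map and checking on stalks that the regrading via the homomorphism $f^\flat$ is compatible on homogeneous tensors. Your write-up just makes explicit the grading bookkeeping that the paper leaves to the reader.
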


\begin{proof}
    As in the ungraded setting, see~\cite[Proposition~2.3.5]{KashiwaraSchapira:1994:SheavesOnManifolds}.
\end{proof}

Clearly, $\catSh{X,\Lambda} = \catSh{X,\Lambda,\ZZ}$.
If $f\colon X \to Y$ is a morphism of graded ringed topological spaces, then $f_{\gr,*}$ as defined in Definition~\ref{def:graded-sheaf-pushforward} enhances to a functor
\[
    f_{\gr,*}\colon \catSh{X,\Lambda_X,\sheaf R_X} \to \catSh{Y,\Lambda_Y,\sheaf R_Y},
\]
and similarly we have a functor
\[
    f_{\gr,!}\colon \catSh{X,\Lambda_X,\sheaf R_X} \to \catSh{Y,\Lambda_Y,\sheaf R_Y}.
\]

\begin{Remark}
    Here we have to be careful to make sure that $\sheaf R_Y$ acts with the correct degrees: 
    If $t\in\sheaf R_Y(V)_\lambda$ and $m \in f_{\gr,*}\sheaf F(V)_\mu$, then $m$ comes from a section in $\sheaf F(f^{-1}V)_{f^\flat(\mu)}$.
    Via the morphism $f_{\gr}^{-1}\sheaf R_Y \to \sheaf R_X$, $rm$ comes from a section $rm \in \sheaf F(f^{-1}V)_{f^\flat(\mu) + f^\flat(\lambda)}$.
    A priory there might be many $\mu' \in \Lambda_Y(V)$ which map to $f^\flat(\mu) + f^\flat(\lambda)$.
    The section $rm \in f_{\gr,*}\sheaf F(V)$ has to be in degree $\mu + \lambda$.

    Again a good example to keep in mind is as in Remark~\ref{rem:graded_pushforward_finitness}, where one endows $\RR$ with the constructible sheaf of rings with stalks $\CC$ on $\RR^*$ and $\CC[t]$ at $0$ with $\deg t = 1$ and $\res{t}{\RR^*}=1$.
    Let $\sheaf F$ be the constant sheaf with stalk $k$ on $\RR^*$ and $j$ the inclusion $\RR^*\hookrightarrow \RR$.
    If $m \in j_{\gr,*}\sheaf F(V)_0$ (with $0 \in V$ open), then $tm$ comes from a section $\res{t}{\RR^*}\res{m}{\RR^*} = \res{m}{\RR^*} \in \sheaf F(V\setminus 0)_0$.
    The sheaf $j_{\gr,*}\sheaf F$ contains $\ZZ$-many copies of this section.
    We have to define $tm$ to be the one in $f_{\gr,*}\sheaf F(V)_1$.
\end{Remark}

\begin{Definition}
    Let $f\colon X \to Y$ be a morphism of graded ringed topological spaces.
    Define a functor
    \[
        f_{\gr}^*\colon \catSh{Y,\Lambda_Y,\sheaf R_Y} \to \catSh{Y,\Lambda_X,\sheaf R_X},
        \qquad
        \sheaf F \mapsto f_{\gr}^{-1}\sheaf F \otimes_{f_{\gr}^{-1}\sheaf R_Y} \sheaf R_X.
    \]
\end{Definition}

\begin{Lemma}\label{lem:module-adjunction}
    Let $f\colon X \to Y$ be a morphism of graded ringed topological spaces.
    Then for $\sheaf F \in \catSh{X,\Lambda_X,\sheaf R_X}$ and $\sheaf G \in \catSh{Y,\Lambda_Y,\sheaf R_Y}$ there exists a natural isomorphism
    \[
        \sheafHom_{\sheaf R_Y}(\sheaf G,\,  f_{\gr,*}\sheaf F) \cong f_{\gr,*}\sheafHom_{\sheaf R_X}(f_{\gr}^{*}\sheaf G,\, \sheaf F).
    \]
    In particular,
    \[
        \Hom_{\sheaf R_Y}(\sheaf G,\,  f_{\gr,*}\sheaf F) \cong \Hom_{\sheaf R_X}(f_{\gr}^{*}\sheaf G,\, \sheaf F)
    \]
    and $f_{\gr}^{*}$ is left adjoint to $f_{\gr,*}$.
\end{Lemma}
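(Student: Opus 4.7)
The plan is to derive the displayed isomorphism by composing the graded sheaf-level adjunction of Lemma~\ref{lem:sheaf-adjunction} with the tensor–hom adjunction of Lemma~\ref{lem:module-tensor-hom-adjunction}, reducing to purely formal manipulations once the correct bookkeeping for the rings $\sheaf R_X$, $\sheaf R_Y$, and $f_{\gr}^{-1}\sheaf R_Y$ is in place.

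First I would unpack the definition $f_{\gr}^*\sheaf G = f_{\gr}^{-1}\sheaf G \otimes_{f_{\gr}^{-1}\sheaf R_Y} \sheaf R_X$ and apply Lemma~\ref{lem:module-tensor-hom-adjunction} to the morphism of $\Lambda_X$-graded sheaves of rings $f^\sharp\colon f_{\gr}^{-1}\sheaf R_Y \to \sheaf R_X$, yielding
\[
    \sheafHom_{\sheaf R_X}(f_{\gr}^{-1}\sheaf G \otimes_{f_{\gr}^{-1}\sheaf R_Y} \sheaf R_X,\, \sheaf F) \;\cong\; \sheafHom_{f_{\gr}^{-1}\sheaf R_Y}\bigl(f_{\gr}^{-1}\sheaf G,\, \sheafHom_{\sheaf R_X}(\sheaf R_X,\sheaf F)\bigr),
\]
and identifying $\sheafHom_{\sheaf R_X}(\sheaf R_X,\sheaf F) = \sheaf F$ (regarded as an $f_{\gr}^{-1}\sheaf R_Y$-module via $f^\sharp$). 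Pushing forward by $f_{\gr,*}$ then rewrites the right-hand side of the claim as $f_{\gr,*}\sheafHom_{f_{\gr}^{-1}\sheaf R_Y}(f_{\gr}^{-1}\sheaf G,\sheaf F)$.

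Next I would match this with the left-hand side via Lemma~\ref{lem:sheaf-adjunction}. At the level of underlying $\Lambda$-graded sheaves, that lemma already gives $\sheafHom_{\catSh{Y,\Lambda_Y}}(\sheaf G, f_{\gr,*}\sheaf F) \cong f_{\gr,*}\sheafHom_{\catSh{X,\Lambda_X}}(f_{\gr}^{-1}\sheaf G, \sheaf F)$. What remains is to check that, under this correspondence, a local section is $\sheaf R_Y$-linear on the left precisely when the corresponding section is $f_{\gr}^{-1}\sheaf R_Y$-linear on the right. This is a local statement that can be verified over an open $V \subseteq Y$ and then over open subsets of $f^{-1}V$: a morphism $\phi\colon \res{\sheaf G}{V} \to \res{f_{\gr,*}\sheaf F}{V}$ respects the action of $r \in \sheaf R_Y(V)_\lambda$ iff the adjoint $\tilde\phi\colon \res{f_{\gr}^{-1}\sheaf G}{f^{-1}V} \to \res{\sheaf F}{f^{-1}V}$ respects the action of the image of $r$ in $f_{\gr}^{-1}\sheaf R_Y$, because the $\sheaf R_Y$-action on $f_{\gr,*}\sheaf F$ is defined precisely by transport along $f^\sharp$.

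The main obstacle is not any deep content but the careful tracking of degrees: one must verify that the sub-$\ZZ$-modules cut out by $\sheaf R_Y$-linearity on the left and $f_{\gr}^{-1}\sheaf R_Y$-linearity on the right correspond in each graded piece of $\sheafHom^\Lambda$, compatibly with the degree shifts used in the definition of $\sheafHom$. Once this is checked, restricting the isomorphism of Lemma~\ref{lem:sheaf-adjunction} to the $R$-linear parts gives the desired natural isomorphism, and taking $p_{\gr,*}$ for $p\colon (Y,\Lambda_Y)\to (\pt,0)$ yields the stated global Hom isomorphism, hence the adjunction $(f_{\gr}^*, f_{\gr,*})$.
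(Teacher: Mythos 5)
Your proposal is correct and takes essentially the same route as the paper: the paper likewise splits the statement into the case $\sheaf R_X = f_{\gr}^{-1}\sheaf R_Y$ (argued exactly as Lemma~\ref{lem:sheaf-adjunction}) and the case of a ring change over the same graded space, which is tensor--Hom adjunction (Lemma~\ref{lem:module-tensor-hom-adjunction}). You merely write out the composition of these two adjunctions explicitly, including the degree bookkeeping that the paper leaves implicit.
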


\begin{proof}
    If $\sheaf R_X = f_{\gr}^{-1}\sheaf R_Y$, then the statement is proven in the same way as Lemma~\ref{lem:sheaf-adjunction}.
    So we can assume that $(X,\Lambda_X) = (Y,\Lambda_Y)$.
In this case the statement is just tensor-Hom adjunction (Lemma~\ref{lem:module-tensor-hom-adjunction}).
\end{proof}

As $f_{\gr,*}$ is left exact, Lemma~\ref{lem:module-adjunction} implies that $f_{\gr}^*$ is right exact.

\begin{Definition}
    For an $\sheaf R$-module $\sheaf F$ and a locally closed subset $Y \subseteq X$ we write $\sheaf F_Y$ for the sheaf satisfying the following conditions
    \[
        \res{\sheaf F_Y}{Y} = \res{\sheaf F}{Y} \quad\text{and}\quad \res{\sheaf F_Y}{X\setminus Y} = 0.
    \]
\end{Definition}

The sheaf $\sheaf F_Y$ is constructed in the usual way, see~\cite[93]{KashiwaraSchapira:1994:SheavesOnManifolds}.
If $\sheaf F$ is a $\Lambda$-graded  $\sheaf R$-module, then so is $\sheaf F_Y$.
The following lemma is standard.

\begin{Lemma}\label{lem:module-basic-triangle}
    Let $Y \subseteq X$ be a locally closed subset.
    The functor $\sheaf F \mapsto \sheaf F_Y$ is exact.
    Further, if $U\subseteq X$ is open, then we have an exact sequence in $\catSh{X,\Lambda,\sheaf R}$
    \[
        0 \to \sheaf F_U \to \sheaf F \to \sheaf F_{X\setminus U} \to 0.
        \qedhere
    \]
\end{Lemma}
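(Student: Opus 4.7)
The plan is to reduce both statements to the corresponding ungraded statements by checking exactness on stalks. A short complex in $\catSh{X,\Lambda,\sheaf R}$ is exact if and only if it is exact on stalks: this follows from the graded analogue of~\cite[Proposition 2.2.2]{KashiwaraSchapira:1994:SheavesOnManifolds} mentioned in the excerpt, together with the standard argument that kernels and cokernels commute with taking stalks. Moreover, exactness of a sequence of $\Lambda_x$-graded $\sheaf R_x$-modules can be tested degree-by-degree. So it suffices to compute the stalks of $\sheaf F_Y$ and verify the statements there.

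First, I would observe that the construction of $\sheaf F_Y$ goes through verbatim for graded sheaves: for $U$ open, $\sheaf F_U$ is the sheafification of the presheaf assigning $\sheaf F(V)$ if $V \subseteq U$ and $0$ otherwise, and this presheaf is naturally $\Lambda$-graded; for $Z$ closed, $\sheaf F_Z = i_{\gr,*} i_{\gr}^{-1} \sheaf F$ (which is graded by Definition~\ref{def:graded-sheaf-pushforward}); the locally closed case combines the two. In each case the grading on $\sheaf F_Y$ is simply inherited pointwise from $\sheaf F$, so the standard stalk computation gives
\[
    (\sheaf F_Y)_x \cong \begin{cases} \sheaf F_x & x \in Y, \\ 0 & x \notin Y, \end{cases}
\]
as an isomorphism of $\Lambda_x$-graded $\sheaf R_x$-modules.

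Given this, the first claim is immediate: $\sheaf F \mapsto \sheaf F_Y$ acts on each stalk as either the identity functor or the zero functor, both of which are exact. For the second claim, the maps $\sheaf F_U \to \sheaf F$ and $\sheaf F \to \sheaf F_{X \setminus U}$ are the evident ones (the adjunction unit and the quotient by the subsheaf of sections supported outside $X \setminus U$), and they are graded by construction. At a point $x \in U$ the stalk sequence becomes $0 \to \sheaf F_x \xrightarrow{\id} \sheaf F_x \to 0 \to 0$, while at $x \notin U$ it becomes $0 \to 0 \to \sheaf F_x \xrightarrow{\id} \sheaf F_x \to 0$; both are manifestly exact as graded modules.

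There is no real obstacle here: as the text itself notes, the lemma is standard, and the only thing the graded setting adds is the bookkeeping that the construction and the two maps respect the $\Lambda$-grading — which is automatic because the grading on $\sheaf F_Y$ is inherited from $\sheaf F$ at every stage of the construction.
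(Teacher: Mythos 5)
Your proof is correct and is exactly the ``standard'' argument the paper alludes to when it omits the proof: the graded construction of $\sheaf F_Y$ (extension by zero $j_{\gr,!}j_{\gr}^{-1}$ on an open set, $i_{\gr,*}i_{\gr}^{-1}$ on a closed set) inherits the grading, and both claims are then checked on stalks, where $(\sheaf F_Y)_x$ is $\sheaf F_x$ or $0$ as graded $\sheaf R_x$-modules. The only step worth a word more is the stalk criterion for exactness in $\catSh{X,\Lambda,\sheaf R}$, but this follows from the paper's earlier observations that sheafification preserves stalks and that morphisms are isomorphisms if and only if they are so on stalks (equivalently, that a graded sheaf whose graded stalks all vanish is zero), so there is no gap.
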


\section{Derived categories}

In this section $(X,\Lambda,\sheaf R)$ will always be a ringed graded space.

As in the non-graded case one defines the kernel and cokernel of a morphism of $\sheaf R$-modules and obtains the following lemma (see~\cite[Proposition~2.2.4]{KashiwaraSchapira:1994:SheavesOnManifolds}).

\begin{Lemma}
    The category $\catSh{X,\Lambda,\sheaf R}$ is abelian.
\end{Lemma}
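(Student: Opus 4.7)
The plan is to follow the standard proof that $\catSh{X}$ is abelian (as in \cite[Proposition~2.2.4]{KashiwaraSchapira:1994:SheavesOnManifolds}), checking at each step that the grading behaves as expected. The key observation underlying everything is that for a fixed abelian group $\Lambda$ and a $\Lambda$-graded ring $R$, the category of $\Lambda$-graded $R$-modules is itself abelian, with kernels, cokernels, and direct sums formed degree-wise. Once this is available, the sheaf-theoretic part is almost formal.

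First I would check that $\catSh{X,\Lambda,\sheaf R}$ is additive: the zero sheaf is the zero object, and the biproduct $\sheaf F \oplus \sheaf G$ is defined by $(\sheaf F \oplus \sheaf G)(U) = \sheaf F(U) \oplus \sheaf G(U)$ with the componentwise grading. The sheaf axiom in the graded sense (that each $(\sheaf F \oplus \sheaf G)_\lambda$ is a sheaf for $\lambda \in \Lambda(U)$) follows from the corresponding property for $\sheaf F$ and $\sheaf G$.

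Next, for a morphism $\phi\colon \sheaf F \to \sheaf G$, I would construct the kernel as the presheaf $U \mapsto \ker \phi_U$, with the grading $(\ker\phi_U)_\lambda = \ker\phi_U \cap \sheaf F(U)_\lambda$ inherited from $\sheaf F(U)$; this makes sense because $\phi_U$ preserves degrees. Since $(\ker\phi|_U)_\lambda$ is the kernel of a map of ordinary sheaves $(\sheaf F|_U)_\lambda \to (\sheaf G|_U)_\lambda$, it is a sheaf, so the kernel presheaf is in fact a graded sheaf. For the cokernel, I would take the presheaf $U \mapsto \mathrm{coker}\,\phi_U$, equipped with the quotient grading, and then sheafify as a $\Lambda$-graded presheaf; the universal property carries over from the ungraded case combined with the universal property of cokernels in graded $\sheaf R(U)$-modules.

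Finally, I would check that the canonical map from coimage to image is an isomorphism. Using that isomorphisms in $\catSh{X,\Lambda,\sheaf R}$ can be detected on stalks (noted right after the sheafification lemma in the excerpt), this reduces to the corresponding statement for $\Lambda_x$-graded $\sheaf R_x$-modules, which holds because graded modules over a graded ring form an abelian category. The main (and only real) obstacle is bookkeeping: one must make sure that the grading on the cokernel presheaf survives sheafification, but this is handled by performing sheafification in the graded category via the previous lemma, which is the graded analogue of the classical construction.
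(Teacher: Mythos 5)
Your proposal is correct and follows essentially the same route as the paper, which simply defines kernels and cokernels degree-wise as in the ungraded case (following \cite[Proposition~2.2.4]{KashiwaraSchapira:1994:SheavesOnManifolds}) and checks the axioms using graded sheafification and stalks. Your explicit verification that the kernel presheaf is already a graded sheaf, that the cokernel needs graded sheafification, and that coimage equals image can be checked on stalks via graded modules is exactly the bookkeeping the paper leaves implicit.
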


We write $\catDSh[*]{X,\Lambda,\sheaf R}$ for the corresponding derived categories, where $*$ is one of $\emptyset,\, {+},\, {-},\, b$.

\begin{Lemma}\label{lem:enough_flats}
    Every $\sheaf R$-module $\sheaf F \in \catSh{X,\Lambda,\sheaf R}$ admits a surjection $\sheaf P \to \sheaf F$ for some flat $\sheaf R$-module $\sheaf P$.
\end{Lemma}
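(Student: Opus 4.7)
The plan is to imitate the standard ungraded construction (compare \cite[Proposition~2.4.12]{KashiwaraSchapira:1994:SheavesOnManifolds}), paying careful attention to grading shifts. First I would produce, for each homogeneous section of $\sheaf F$, a ``flat local generator''. Concretely, given an open $U \subseteq X$, a section $\lambda \in \Lambda(U)$, and a section $s \in \sheaf F(U)_\lambda$, let $j_U\colon U \hookrightarrow X$ denote the inclusion and set
\[
    \sheaf P_{U,s} := (j_U)_{\gr,!}\bigl((\res{\sheaf R}{U})\langle -\lambda\rangle\bigr).
\]
Under the shift convention $\Gamma(V,\sheaf G\langle \mu\rangle)_\nu = \Gamma(V,\sheaf G)_{\nu+\res{\mu}{V}}$, the unit $1 \in \sheaf R(U)_0$ sits in degree $\lambda$ inside $\res{\sheaf R}{U}\langle -\lambda\rangle$; extending by zero we obtain a distinguished section of $\sheaf P_{U,s}$ of degree $\lambda$. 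I would then construct a degree-zero morphism $\phi_{U,s}\colon \sheaf P_{U,s} \to \sheaf F$ carrying this generator to $s$ via the formula $r \mapsto r\cdot\res{s}{V\cap U}$, extended by zero off of $\supp r$. No gluing obstruction arises since the zero section is homogeneous of every degree simultaneously.

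Next, I would take the direct sum $\sheaf P := \bigoplus_{(U,\lambda,s)} \sheaf P_{U,s}$ with $\phi := \bigoplus \phi_{U,s}\colon \sheaf P \to \sheaf F$, indexed over all such triples. Surjectivity is a stalkwise assertion: each germ of $\sheaf F$ at $x \in X$ is a finite sum of homogeneous germs, and each such germ is represented by some $s \in \sheaf F(U)_\lambda$ with $x \in U$ and therefore lies in the image of $\phi_{U,s}$.

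Finally, I would verify flatness. The stalks of $\sheaf P_{U,s}$ are $\sheaf R_x\langle -\lambda_x\rangle$ for $x \in U$ and $0$ for $x \notin U$, both of which are graded-free, hence flat, $\sheaf R_x$-modules. Since tensor products of graded $\sheaf R$-modules commute with stalks (the stalks of a sheaf tensor product being the graded tensor product of stalks) and exactness of sequences in $\catSh{X,\Lambda,\sheaf R}$ is stalkwise, each $\sheaf P_{U,s}$ is flat. Flatness of $\sheaf P$ then follows from the fact that $\otimes_{\sheaf R}$ commutes with arbitrary direct sums.

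The only delicate point is degree bookkeeping: one must use the shift $\langle -\lambda\rangle$ (not $\langle +\lambda\rangle$) so that $\phi_{U,s}$ has degree $0$ and is therefore a morphism of $\catSh{X,\Lambda,\sheaf R}$, and one needs the observation that extension by zero makes sense degree-by-degree because the zero section lies in every graded component.
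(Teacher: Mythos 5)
Your proposal is correct and follows essentially the same route as the paper: for each open $U$ and homogeneous $s \in \sheaf F(U)_\lambda$ one takes the extension by zero of $\res{\sheaf R}{U}\langle-\lambda\rangle$ (the paper writes this as $\sheaf R_U\langle-\lambda\rangle$), maps its canonical degree-$\lambda$ generator to $s$, sums over all such data to get surjectivity stalkwise, and checks flatness on stalks since these are shifts of free $\sheaf R_x$-modules. Your extra bookkeeping about the sign of the shift and the degree of the unit section is exactly the point the paper's construction relies on implicitly.
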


\begin{proof}
    For each open $U \subseteq X$ and each homogeneous section $s \in \sheaf F(U)_\lambda$ set $\sheaf P(U,s) = \sheaf R_U\langle-\lambda\rangle$.
    Then $\sheaf P(U,s)$ has a map to $\sheaf F$ sending $1$ to $s$.
    Thus $\sheaf P = \bigoplus_{U,s} \sheaf P(U,s)$ maps onto $\sheaf F$.
    Further $\sheaf P$ is flat since for each $x \in X$ the stalk $\sheaf P_x$ is a sum of shifts of free $\sheaf R_x$-modules.
\end{proof}

\subsection{The derived category via model structures}

Let $\cat{Ch}(X,\Lambda,\sheaf R)$ be the category of complexes of $\sheaf R$-modules.

\begin{Proposition}\label{prop:monoidal-model-structure}
    The category $\cat{Ch}(X,\Lambda,\sheaf R)$ can be endowed with a symmetric monoidal model structure such that the weak equivalences are the quasi-equivalences of complexes and the monoidal product is given by the tensor product of complexes.
    In particular $\bigl(\catDSh{X,\Lambda,\sheaf R}, \Lotimes_{\sheaf R}, \sheaf R, \RsheafHom_{\sheaf R}\bigr)$ is a closed monoidal category.
\end{Proposition}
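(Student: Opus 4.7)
The plan is to use the framework of Hovey and Gillespie for constructing monoidal model structures on chain complexes in a Grothendieck abelian monoidal category. First I would verify that $\catSh{X,\Lambda,\sheaf R}$ is a Grothendieck abelian category: it is already abelian; small colimits and limits are constructed as in the ungraded setting, with componentwise gradings; filtered colimits are exact since they commute with sheafification and with the grading decomposition in each degree; and a generator is furnished by the family $\{(\sheaf R\langle\lambda\rangle)_U : U \subseteq X \text{ open}, \lambda \in \Lambda(U)\}$ of extensions by zero of shifted copies of the structure sheaf, which already appear in the proof of Lemma~\ref{lem:enough_flats}.

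Next I would invoke the cotorsion-pair approach of Gillespie to produce a cofibrantly generated model structure on $\cat{Ch}(X,\Lambda,\sheaf R)$ whose weak equivalences are the quasi-isomorphisms. The key input is precisely Lemma~\ref{lem:enough_flats}: the class of flat $\sheaf R$-modules forms the left half of a complete cotorsion pair, and Gillespie's lifting procedure produces a compatible cotorsion pair on chain complexes whose left class consists of the dg-flat complexes. This gives the cofibrations (roughly, the degreewise monomorphisms with dg-flat cokernel), and the resulting homotopy category is the derived category $\catDSh{X,\Lambda,\sheaf R}$.

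For the symmetric monoidal compatibility I would verify the pushout-product axiom and the unit axiom. The essential facts are that the graded tensor product of two flat $\sheaf R$-modules is again flat (which may be checked on stalks using the classical result for graded modules over a graded ring) and that $\sheaf R$ is itself flat, hence cofibrant. Since tensoring with a dg-flat complex preserves quasi-isomorphisms, the tensor product descends to a well-defined derived functor $\Lotimes_{\sheaf R}$ on $\catDSh{X,\Lambda,\sheaf R}$. By the Quillen adjunction criterion applied to the tensor-hom adjunction of Lemma~\ref{lem:module-tensor-hom-adjunction}, the internal Hom is a right Quillen bifunctor and descends to the derived functor $\RsheafHom_{\sheaf R}$, producing the claimed closed monoidal structure.

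The main obstacle is the rigorous verification of the pushout-product axiom, where one must check that the lift of the flat cotorsion pair to chain complexes remains compatible with the graded tensor product. This amounts to a careful bookkeeping of degrees in arguments that are classical in the ungraded setting; the existence of enough flats (Lemma~\ref{lem:enough_flats}) and the stalkwise nature of flatness make the reduction to the ungraded case essentially formal, so no new ideas beyond those in Gillespie's original construction should be required.
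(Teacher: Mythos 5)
Your route is viable but genuinely different from the paper's. The paper follows Dell'Ambrogio--Stevenson and runs everything through the Cisinski--D\'eglise formalism of descent structures: one checks that the family $\mathcal G=\{\sheaf R_U\langle\lambda\rangle\}$ ($U\subseteq X$ open, $\lambda\in\Lambda(U)$) is a flat family of generators, completes it to a flat descent structure, and obtains the symmetric monoidal $\mathcal G$-model structure on $\cat{Ch}(X,\Lambda,\sheaf R)$; the closed monoidal structure on $\catDSh{X,\Lambda,\sheaf R}$ then comes from Hovey's Theorem~4.3.2, exactly as in your last step. You instead propose Gillespie's flat cotorsion-pair machinery, lifting the flat cotorsion pair in $\catSh{X,\Lambda,\sheaf R}$ to a monoidal model structure on complexes whose cofibrant objects are the dg-flat complexes. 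Both arguments use the same generating family (your $(\sheaf R\langle\lambda\rangle)_U$ are the paper's $\sheaf R_U\langle\lambda\rangle$) and both end at Hovey's theorem; the paper's choice buys minimal verification (only the generator/flatness property of $\mathcal G$ needs checking, the set-theoretic smallness issues being packaged in Cisinski--D\'eglise), while your choice buys an explicit description of the cofibrant objects, which is convenient when computing $\Lotimes_{\sheaf R}$.

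One step in your sketch is asserted too quickly, and it is the crux: Lemma~\ref{lem:enough_flats} (enough flat objects) does \emph{not} imply that $(\mathrm{Flat},\mathrm{Flat}^{\perp})$ is a complete cotorsion pair, nor that Gillespie's lift to complexes is available. Completeness is a set-theoretic statement; in the ungraded sheaf case it is the Enochs--Estrada/Gillespie theorem that the flat sheaves form a Kaplansky class (every flat module admits a filtration by flat subobjects of bounded size), and that is precisely the nontrivial input behind the flat model structure on complexes of sheaves. To make your argument complete you must either redo this filtration argument in the graded setting (it should go through, working with the generators $\sheaf R_U\langle\lambda\rangle$ and one degree $\lambda$ at a time, but it is not a consequence of having enough flats) or cite the graded analogue explicitly. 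A smaller point: the claim that the tensor product of flat graded modules is flat \enquote{checked on stalks} is fine, but should be accompanied by the observation that exactness in $\catSh{X,\Lambda,\sheaf R}$ is detected on (graded) stalks and that $\otimes_{\sheaf R}$ commutes with stalks. With those justifications supplied, your proof closes the same statement; as written, the completeness of the flat cotorsion pair is the genuine gap.
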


\begin{proof}
    The proof of this proposition is along the lines of that for \cite[Proposition~2.18]{DellAmbrogioStevenson:2013:DerivedCategoryOfAGradedCommutativeNoetherianRing}.
    Thus we let $\mathcal{G}$ be the set of sheaves $\sheaf R_U\langle \lambda \rangle$, where $U$ runs over all open subsets of $X$ and $\lambda \in \Lambda(U)$.
    Then $\mathcal{G}$ is a flat family of generators in the sense of \cite[Section~3.1]{CisinskiDeglise:2009:LocalAndStableHomologicalAlgebraInGrothendieckCategories}.
    By \cite[Remark~2.12]{CisinskiDeglise:2009:LocalAndStableHomologicalAlgebraInGrothendieckCategories} we can complete $\mathcal G$ to a descent structure $(\mathcal G,\mathcal H)$, which is automatically flat by \cite[Proposition~3.7]{CisinskiDeglise:2009:LocalAndStableHomologicalAlgebraInGrothendieckCategories}.
    Thus the corresponding $\mathcal{G}$-model structure on $\cat{Ch}(X,\Lambda,\sheaf R)$ yields a symmetric monoidal model category \cite[Proposition~3.2]{CisinskiDeglise:2009:LocalAndStableHomologicalAlgebraInGrothendieckCategories}.
    The theorem then follows from \cite[Theorem~4.3.2]{Hovey:1999:ModelCategories}.
\end{proof}

\subsection{Acyclic sheaves}

In this section we introduce several properties of sheaves and show that they imply acyclicity for various functors.

First, as usual one calls a sheaf $\sheaf I \in \catSh{X,\Lambda,\sheaf R}$ \emph{injective} if $\Hom_{\sheaf R}({-},\,\sheaf I)$ is an exact functor.

\begin{Lemma}
    The category $\catSh{X,\Lambda,\sheaf R}$ has enough injectives.
\end{Lemma}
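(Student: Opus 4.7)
The plan is to adapt the classical construction of Kashiwara--Schapira~\cite[Proposition~2.4.3]{KashiwaraSchapira:1994:SheavesOnManifolds}: given $\sheaf F \in \catSh{X,\Lambda,\sheaf R}$, I would embed $\sheaf F$ into a product of pushforwards of injective $\sheaf R_x$-modules, one chosen at each stalk, keeping careful track of the grading throughout.

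First I would verify that for each $x \in X$ the abelian category of $\Lambda_x$-graded $\sheaf R_x$-modules has enough injectives. This is standard for module categories over graded rings: the shifted free modules $\sheaf R_x\langle \lambda\rangle$ with $\lambda \in \Lambda_x$ form a set of generators, filtered colimits of graded modules are exact degreewise, and hence Grothendieck's theorem for AB5 categories with a generator applies.

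Next, given $\sheaf F$ and $x \in X$, I would pick a monomorphism $\sheaf F_x \hookrightarrow I_x$ into such an injective and let $i_x\colon (\{x\},\Lambda_x,\sheaf R_x) \hookrightarrow (X,\Lambda,\sheaf R)$ denote the evident strict inclusion of graded ringed spaces. I would then form
\[
    \sheaf I = \prod_{x \in X} i_{x,\gr,*}\, I_x
\]
together with the morphism $\sheaf F \to \sheaf I$ whose $x$-th component is adjoint to the chosen inclusion $\sheaf F_x \hookrightarrow I_x$. This map is a monomorphism, since a section $s \in \sheaf F(U)$ vanishes as soon as all of its germs $s_x$ do.

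It then remains to show that $\sheaf I$ is injective. Products of injectives are injective in any abelian category, so this reduces to the assertion that each $i_{x,\gr,*} I_x$ is injective; by Lemma~\ref{lem:module-adjunction} this in turn reduces to the exactness of the left adjoint $i_{x,\gr}^*$. But $(i_x)_{\gr}^{-1}\sheaf R = \sheaf R_x$ by construction, so $i_{x,\gr}^*$ coincides with the stalk functor at $x$, which is exact. The main obstacle is then purely bookkeeping --- one must check that the graded $\sheaf R$-action on $i_{x,\gr,*} I_x$ lands in the correct degrees (as discussed in the remark preceding the definition of $f_{\gr}^*$) and that products in $\catSh{X,\Lambda,\sheaf R}$ are computed degreewise --- after which the argument is formally identical to the ungraded one.
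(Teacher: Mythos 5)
Your argument is exactly the paper's: the reduction to stalks via the Kashiwara--Schapira construction (embedding $\sheaf F$ into $\prod_{x} i_{x,\gr,*} I_x$ and using exactness of the left adjoint $i_{x,\gr}^{*}$), combined with the fact that categories of $\Lambda_x$-graded $\sheaf R_x$-modules have enough injectives, which the paper cites as classical and you rederive from Grothendieck's theorem. The details you spell out (degreewise products, monomorphism via germs, strictness of the point inclusion) are correct, so this is a fleshed-out version of the same proof.
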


\begin{proof}
%
    As in the ungraded situation one reduces to the case of $X$ being a single point, see~\cite[Proposition~2.4.3]{KashiwaraSchapira:1994:SheavesOnManifolds}.
    There the statement follows from the corresponding statement for graded modules, which is classical (see for example \stackcite{04JD}).
\end{proof}

\begin{Lemma}
    Let $\sheaf I$ be an injective object of $\catSh{X,\Lambda,\sheaf R}$.
    Then $\sheaf I\langle \lambda\rangle$ is injective for all $\lambda \in \Lambda(X)$ and $\res{\sheaf I}{U}$ is injective in $\catSh{U,\res{\Lambda}{U},\res{\sheaf R}{U}}$ for all open subsets $U \subseteq X$.
    In particular $\Hom_{\sheaf R}^\Lambda({-},\,\sheaf I)$ and $\sheafHom_{\sheaf R}({-},\,\sheaf I)$ are exact functors.
\end{Lemma}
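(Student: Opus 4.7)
First, for the shift invariance, the functor $\sheaf F \mapsto \sheaf F\langle\lambda\rangle$ is an exact autoequivalence of $\catSh{X,\Lambda,\sheaf R}$ with quasi-inverse $\sheaf F \mapsto \sheaf F\langle-\lambda\rangle$, since it simply relabels degrees without changing the underlying ungraded sheaf of $\sheaf R$-modules; kernels and cokernels match up under the regrading. Any exact autoequivalence preserves injectives, so $\sheaf I\langle\lambda\rangle$ is injective for every $\lambda \in \Lambda(X)$.

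For the restriction claim, let $j\colon U \hookrightarrow X$ be the open inclusion. My plan is to exhibit an exact left adjoint to $j_{\gr}^{-1} = \res{(-)}{U}$, namely the extension-by-zero functor $j_{\gr,!}\colon \catSh{U,\res{\Lambda}{U},\res{\sheaf R}{U}} \to \catSh{X,\Lambda,\sheaf R}$ (modelled on the construction of $\sheaf F_U$ used just before Lemma~\ref{lem:module-basic-triangle}, but applied to a sheaf originally defined on $U$). Both the adjunction $j_{\gr,!} \dashv j_{\gr}^{-1}$ and the exactness of $j_{\gr,!}$ on an open inclusion go through exactly as in the ungraded case; the only extra bookkeeping is to check that the $\sheaf R$-action on $j_{\gr,!}\sheaf G$ is compatible (which it is, since $\sheaf R$ acts through $\res{\sheaf R}{U}$ on the support and by zero elsewhere) and that everything respects the gradings, which amounts to unwinding the definitions. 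Since $j_{\gr}^{-1}$ is then right adjoint to an exact functor, it preserves injectives. Alternatively, one can argue directly: given a monomorphism $\sheaf A \hookrightarrow \sheaf B$ on $U$ and a map $\phi\colon \sheaf A \to \res{\sheaf I}{U}$, extend by zero to $j_{\gr,!}\sheaf A \hookrightarrow j_{\gr,!}\sheaf B$, compose $j_{\gr,!}\phi$ with the counit $j_{\gr,!}\res{\sheaf I}{U} \to \sheaf I$, use injectivity of $\sheaf I$ to extend over $j_{\gr,!}\sheaf B$, and restrict back to $U$.

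The exactness consequences are then immediate. By definition
\[
    \Hom_{\sheaf R}^\Lambda({-},\,\sheaf I) = \bigoplus_{\lambda \in \Lambda(X)} \Hom_{\sheaf R}({-},\, \sheaf I\langle\lambda\rangle);
\]
each summand is exact since $\sheaf I\langle\lambda\rangle$ is injective, and a direct sum of exact functors is exact. For $\sheafHom_{\sheaf R}({-},\,\sheaf I)$, the sections over any open $U$ are $\Hom_{\res{\sheaf R}{U}}^{\res{\Lambda}{U}}(\res{{-}}{U},\,\res{\sheaf I}{U})$, which is exact in the first variable by applying the preceding step to $\res{\sheaf I}{U}$ (injective by the restriction step) and using that $\res{(-)}{U}$ is exact. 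Exactness on sections over every open, in particular, implies exactness on stalks and hence exactness of the corresponding sequence of graded sheaves. The main technical point I expect to require care with is verifying that $j_{\gr,!}$ provides a well-defined exact left adjoint of $j_{\gr}^{-1}$ in the ringed graded setting; the argument parallels the ungraded case but the grading and the $\sheaf R$-action have to be tracked carefully. Everything else in the proof is routine.
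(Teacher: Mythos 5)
Your proposal is correct and follows essentially the same route as the paper: shift-invariance via the exactness of the degree-shift (equivalently the identity $\Hom_{\sheaf R}(\sheaf F,\sheaf I\langle\lambda\rangle)=\Hom_{\sheaf R}(\sheaf F\langle-\lambda\rangle,\sheaf I)$), and injectivity of $\res{\sheaf I}{U}$ via an exact left adjoint to restriction given by extension by zero, which the paper realizes concretely as $\sheaf G\mapsto (j_{\gr,*}\sheaf G)_U$ — the same functor you call $j_{\gr,!}$ for the open inclusion $j$. The deduction of exactness of $\Hom^\Lambda_{\sheaf R}({-},\sheaf I)$ and $\sheafHom_{\sheaf R}({-},\sheaf I)$ is also as intended.
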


\begin{proof}
    The first statement follows from $\Hom_{\sheaf R}(\sheaf F,\, \sheaf I\langle \lambda\rangle) = \Hom_{\sheaf R}(\sheaf F\langle -\lambda\rangle,\, \sheaf I)$.
    If $\sheaf G$ is an $\res{R}{U}$-module, then
    \[
        \Hom_{\res{\sheaf R}{U}}(\sheaf G,\, \res{\sheaf I}{U}) = \Hom_{\sheaf R}\bigl( (j_{\gr,*}\sheaf G)_U,\, \sheaf I\bigr),
    \]
    where $j\colon U \hookrightarrow X$ is the inclusion.
    As $j_{\gr,*}$ and ${-}_U$ are exact, the second statement follows.
\end{proof}

\begin{Definition}
    A sheaf $\sheaf F \in \catSh{X,\Lambda}$ is called \emph{flabby} if for any open subset $U \subseteq X$ and any $\lambda \in \Lambda(U)$ the sheaf $(\res{\sheaf F}{U})_\lambda$ is flabby as an ordinary sheaf.
    In other words, for any open $V \subseteq U \subseteq X$ we require that the restriction morphism $\sheaf F(U)_\lambda \to \sheaf F(V)_{\res{\lambda}{V}}$ is surjective.
\end{Definition}

Unless $\Lambda$ is flabby, a flabby graded sheaf will not necessarily be flabby as an ordinary (pre-)sheaf.

\begin{Lemma}
    Let $\sheaf I$ be injective.
    Then for every $\sheaf F \in \catSh{X,\Lambda,\sheaf R}$ the sheaf $\sheafHom_{\sheaf R}(\sheaf F,\, \sheaf I)$ is flabby.
    In particular every injective $\sheaf R$-module is flabby.
\end{Lemma}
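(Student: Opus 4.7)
The plan is to mimic the classical argument that $\sheafHom_{\sheaf R}({-}, \sheaf I)$ is flabby for injective $\sheaf I$, tracking gradings throughout. Unraveling the definition of $\sheafHom_{\sheaf R}$, a section over an open $U \subseteq X$ of degree $\lambda \in \Lambda(U)$ is the same datum as a morphism $\phi\colon \res{\sheaf F}{U} \to \res{\sheaf I}{U}\langle \lambda\rangle$ in $\catSh{U, \res{\Lambda}{U}, \res{\sheaf R}{U}}$. Flabbiness of $\sheafHom_{\sheaf R}(\sheaf F, \sheaf I)$ thus amounts to showing that for every open $V \subseteq U$, every morphism $\psi\colon \res{\sheaf F}{V} \to \res{\sheaf I}{V}\langle \res{\lambda}{V}\rangle$ on $V$ extends to a morphism $\phi$ on $U$ of degree $\lambda$.

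To produce the extension, I would use the sheaf $(\res{\sheaf F}{U})_V$ furnished by Lemma~\ref{lem:module-basic-triangle}, which fits into an exact sequence
\[
    0 \to (\res{\sheaf F}{U})_V \to \res{\sheaf F}{U} \to (\res{\sheaf F}{U})_{U \setminus V} \to 0
\]
in $\catSh{U, \res\Lambda U, \res{\sheaf R} U}$. Since $(\res{\sheaf F}{U})_V$ is characterized by being zero on $U \setminus V$ with restriction $\res{\sheaf F}{V}$ on $V$, the universal property of extension by zero along the open inclusion $V \hookrightarrow U$ identifies morphisms $\psi\colon \res{\sheaf F}{V} \to \res{\sheaf I}{V}\langle \res\lambda V\rangle$ with morphisms $\bar\psi\colon (\res{\sheaf F}{U})_V \to \res{\sheaf I}{U}\langle\lambda\rangle$ in the graded category over $U$, preserving the degree labels. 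The preceding lemma tells us that $\res{\sheaf I}{U}\langle\lambda\rangle$ is injective in $\catSh{U, \res\Lambda U, \res{\sheaf R} U}$, so applying injectivity to the monomorphism $(\res{\sheaf F}{U})_V \hookrightarrow \res{\sheaf F}{U}$ extends $\bar\psi$ to the desired $\phi\colon \res{\sheaf F}{U} \to \res{\sheaf I}{U}\langle\lambda\rangle$, which restricts to $\psi$ by construction.

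For the concluding assertion, I would simply take $\sheaf F = \sheaf R$: one has $\sheafHom_{\sheaf R}(\sheaf R, \sheaf I) \cong \sheaf I$, and so $\sheaf I$ itself is flabby. The main point requiring care is verifying that the $j_! \dashv j^{-1}$-style adjunction $\Hom_{\res{\sheaf R}{U}}((\res{\sheaf F}{U})_V, {-}) \cong \Hom_{\res{\sheaf R}{V}}(\res{\sheaf F}{V}, \res{({-})}{V})$ continues to hold in the $\Lambda$-graded setting and matches a degree-$\res\lambda V$ morphism on $V$ with a degree-$\lambda$ morphism on $U$. This is routine given the definition of $(-)_V$ as extension by zero from an open and is the direct graded analogue of the argument in \cite[Proposition~2.4.6]{KashiwaraSchapira:1994:SheavesOnManifolds}.
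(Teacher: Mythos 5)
Your proof is correct and is essentially the paper's own argument: both hinge on the exact sequence of Lemma~\ref{lem:module-basic-triangle}, the preceding lemma's statement that shifts and restrictions of $\sheaf I$ remain injective, and the (routine) graded extension-by-zero identification $\Hom_{\res{\sheaf R}{U}}\bigl((\res{\sheaf F}{U})_V,\,{-}\bigr) \cong \Hom_{\res{\sheaf R}{V}}\bigl(\res{\sheaf F}{V},\,\res{({-})}{V}\bigr)$. The only difference is packaging: the paper applies the exact functor $\Hom^\Lambda_{\sheaf R}({-},\,\sheaf I)$ to the sequence over $X$ and sorts out the degrees via a direct-sum decomposition (leaving the general case $V \subseteq U$ to restriction), whereas you extend a single degree-$\res{\lambda}{V}$ morphism directly along $(\res{\sheaf F}{U})_V \hookrightarrow \res{\sheaf F}{U}$, which is the same mechanism.
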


\begin{proof}
    Let $U \subseteq X$ be open.
    Consider the short exact sequence
    \[
        0 \to \sheaf F_U \to \sheaf F \to \sheaf F_{X\setminus U} \to 0.
    \]
    Applying the exact functor $\Hom^\Lambda_{\sheaf R}({-},\,\sheaf I)$ we get a surjection
    \[
        \Gamma(X,\, \sheafHom_{\sheaf R}(\sheaf F,\,\sheaf I)) = \Hom^\Lambda_{\sheaf R}(\sheaf F,\, \sheaf I)
        \onto
        \Hom^\Lambda_{\sheaf R}(\sheaf F_U,\,\sheaf I).
    \]
    We now have
    \begin{align*}
        \Hom^\Lambda_{\sheaf R}(\sheaf F_U,\,\sheaf I) & =
        \bigoplus_{\lambda \in \Lambda(X)} \Hom_{\sheaf R}(\sheaf F_U,\,\sheaf I\langle \lambda \rangle) \\ &=
        \bigoplus_{\lambda \in \Lambda(X)} \Hom_{\res{\sheaf R}U}(\res{\sheaf F}{U},\, \res{\sheaf I}{U}\langle \res{\lambda}{U} \rangle) \\ &=
        \bigoplus_{\lambda \in \Lambda(U)} \bigoplus_{\mu \in (\rho^X_U)^{-1}(\lambda)} \Hom_{\res{\sheaf R}U}(\res{\sheaf F}{U},\, \res{\sheaf I}{U}\langle \lambda \rangle).
    \end{align*}
    The statement follows.
\end{proof}

\begin{Lemma}
    Let $0 \to \sheaf F \to \sheaf G \to \sheaf H \to 0$ be an exact sequence in $\catSh{X,\Lambda_X}$ with $\sheaf F$ flabby, and let $f\colon (X,\Lambda_X) \to (Y,\Lambda_Y)$ be a morphism of graded spaces.
    Then $0 \to f_{\gr,*}\sheaf F \to f_{\gr,*}\sheaf G \to f_{\gr,*}\sheaf H \to 0$ is exact.
    In particular $0 \to \Gamma(X,\sheaf F) \to \Gamma(X,\sheaf G) \to \Gamma(X,\sheaf H) \to 0$ is a short exact sequence of $\Lambda(X)$-graded $\sheaf R(X)$-modules.
\end{Lemma}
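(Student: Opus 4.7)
The plan is to reduce to the classical result that pushforward of ordinary sheaves preserves short-exactness when the first term is flabby. Observe first that left exactness of $0 \to f_{\gr,*}\sheaf F \to f_{\gr,*}\sheaf G \to f_{\gr,*}\sheaf H$ is automatic: by Lemma~\ref{lem:sheaf-adjunction}, $f_{\gr,*}$ is right adjoint to the exact functor $f_{\gr}^{-1}$. So the only content is the surjectivity of $f_{\gr,*}\sheaf G \to f_{\gr,*}\sheaf H$.

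Next, I would reduce to checking the degree-by-degree ordinary sheaves. For any open $V \subseteq Y$ and any $\mu \in \Lambda_Y(V)$, put $\lambda = f^\flat(\mu) \in \Lambda_X(f^{-1}V)$, and unwind the formula in Definition~\ref{def:graded-sheaf-pushforward} to obtain a canonical identification of ordinary sheaves on $V$,
\[
    \bigl((f_{\gr,*}\sheaf F)|_V\bigr)_\mu \cong (f|_{f^{-1}V})_*\bigl((\sheaf F|_{f^{-1}V})_\lambda\bigr),
\]
and similarly for $\sheaf G$ and $\sheaf H$. Since a morphism of $\Lambda_Y$-graded sheaves is surjective iff each of its ordinary degree-$\mu$ components is surjective (check on stalks, using that a map of $\Lambda_{Y,y}$-graded modules is surjective iff it is so in each degree), it suffices to show that for each such $\mu$ the classical pushforward sequence
\[
    0 \to (f|_{f^{-1}V})_*(\sheaf F|_{f^{-1}V})_\lambda \to (f|_{f^{-1}V})_*(\sheaf G|_{f^{-1}V})_\lambda \to (f|_{f^{-1}V})_*(\sheaf H|_{f^{-1}V})_\lambda \to 0
\]
is short-exact as ordinary sheaves on $V$.

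For this, two ingredients. First, the exact sequence $0 \to \sheaf F \to \sheaf G \to \sheaf H \to 0$ of graded sheaves induces, in each fixed degree $\lambda$, an exact sequence of ordinary sheaves $0 \to (\sheaf F|_{f^{-1}V})_\lambda \to (\sheaf G|_{f^{-1}V})_\lambda \to (\sheaf H|_{f^{-1}V})_\lambda \to 0$; this is checked on stalks, which at each point are homogeneous summands of a short-exact sequence of graded modules. Second, the assumption that $\sheaf F$ is flabby in the graded sense is precisely the statement that each $(\sheaf F|_{f^{-1}V})_\lambda$ is a flabby ordinary sheaf. Applying the classical fact that an ordinary short-exact sequence of sheaves with flabby first term is preserved by any $f_*$ (see e.g.\ \cite[Proposition~2.5.4]{KashiwaraSchapira:1994:SheavesOnManifolds}) yields the required ordinary short-exactness.

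Finally, the ``in particular'' clause follows by specializing to the projection $p\colon (X,\Lambda_X) \to (\pt,\Lambda_X(X))$, for which $p_{\gr,*}\sheaf F = \Gamma(X,\sheaf F)$ as a $\Lambda_X(X)$-graded $\sheaf R(X)$-module. The main obstacle is not conceptual but bookkeeping: verifying that the definitional formula really does identify the degree-$\mu$ ordinary component of $f_{\gr,*}\sheaf F$ with the classical $f_*$ of the degree-$f^\flat(\mu)$ component of $\sheaf F$, so that the classical flabby result can be imported cleanly.
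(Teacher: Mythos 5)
Your proof is correct and follows essentially the same route as the paper: reduce to the ordinary sheaves $(\sheaf F)_{f^\flat(\mu)}$ in each degree, note that graded flabbiness means each such component is flabby in the classical sense, and invoke the classical result that $f_*$ preserves short exact sequences with flabby first term. The extra bookkeeping you spell out (stalkwise degree-by-degree exactness and the identification of the degree-$\mu$ component of $f_{\gr,*}\sheaf F$ with the classical pushforward) is exactly what the paper leaves implicit.
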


\begin{proof}
    It suffices to show that for every $\lambda \in f^\flat(\Lambda_Y)$ the sequence
    \[
        0 \to f_*(\sheaf F_\lambda) \to f_*(\sheaf G_\lambda) \to f_*(\sheaf H_\lambda) \to 0
    \]
    is exact.
    By definition, $\sheaf F_\lambda$ is flabby as an ordinary sheaf, so this assertion is classical, see~\cite[Proposition~2.4.7]{KashiwaraSchapira:1994:SheavesOnManifolds}.
\end{proof}

\begin{Definition}
    A sheaf $\sheaf F \in \catSh{X,\Lambda}$ is called \emph{soft} if for any open subset $U \subseteq X$ and $\lambda \in \Lambda(U)$ the sheaf $(\res{\sheaf F}{U})_\lambda$ is soft as an ordinary sheaf, i.e.~for every compact subset $i\colon K \hookrightarrow U$ the restriction $\Gamma\bigl(U,(\res{\sheaf F}{U})_\lambda\bigr) \to \Gamma\bigl(K,i^{-1}(\res{\sheaf F}{U})_\lambda\bigr)$ is surjective.
\end{Definition}

Every flabby sheaf (and hence every injective sheaf) is soft.

\begin{Lemma}\label{lem:soft-is-!-acyclic}
    Let $0 \to \sheaf F \to \sheaf G \to \sheaf H \to 0$ be an exact sequence in $\catSh{X,\Lambda_X}$ with $\sheaf F$ soft, and let $f\colon (X,\Lambda_X) \to (Y,\Lambda_Y)$ be a morphism of graded spaces.
    Then $0 \to f_{\gr,!}\sheaf F \to f_{\gr,!}\sheaf G \to f_{\gr,!}\sheaf H \to 0$ is exact.
\end{Lemma}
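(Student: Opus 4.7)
My plan is to reduce the statement to its ungraded analogue \cite[Proposition~2.5.7]{KashiwaraSchapira:1994:SheavesOnManifolds} by decomposing the graded sheaves into their degree-$\lambda$ components, where the softness hypothesis has already been phrased component-wise.

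First I would handle left exactness essentially for free, since $f_{\gr,!}$ is a subfunctor of the left exact $f_{\gr,*}$; if $\sheaf F \hookrightarrow \sheaf G$ is an inclusion, a section of $f_{\gr,*}\sheaf F$ whose image in $f_{\gr,*}\sheaf G$ has support proper over the base must itself have support proper over the base, because an injection of sheaves preserves supports. Thus only the surjectivity of $f_{\gr,!}\sheaf G \onto f_{\gr,!}\sheaf H$ is at stake.

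To verify surjectivity I would work one graded piece at a time. Fix an open $V \subseteq Y$ and some $\mu \in \Lambda_Y(V)$, and set $\lambda = f^\flat(\mu) \in \Lambda_X(f^{-1}V)$. Unfolding the definition, the $\mu$-graded piece of $\Gamma(V, f_{\gr,!}\sheaf F)$ is exactly the set of sections of the ordinary sheaf $(\res{\sheaf F}{f^{-1}V})_\lambda$ whose support is proper over $V$, and likewise for $\sheaf G$ and $\sheaf H$. Since exactness in $\catSh{X,\Lambda_X}$ is detected on stalks, and graded stalks decompose as direct sums over degrees, the given short exact sequence restricts in degree $\lambda$ to a short exact sequence of ordinary sheaves
\[
    0 \to (\res{\sheaf F}{f^{-1}V})_\lambda \to (\res{\sheaf G}{f^{-1}V})_\lambda \to (\res{\sheaf H}{f^{-1}V})_\lambda \to 0
\]
in which the first term is soft by the very definition of graded softness.

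The classical result for ordinary $f_!$ applied to $f\colon f^{-1}V \to V$ then produces the desired surjection on $\mu$-graded sections, and since $V$ and $\mu$ were arbitrary this completes the argument. I do not anticipate any serious obstacle: the definitions of $f_{\gr,!}$ and of graded softness have been rigged so that the problem literally decomposes into copies of the ungraded problem. The only point requiring care is the degree bookkeeping—translating $\mu \in \Lambda_Y(V)$ to $\lambda = f^\flat(\mu)$ and confirming that the passage to graded components is natural in the sheaf argument—both of which are immediate from the construction.
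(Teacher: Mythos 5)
Your overall strategy---reducing to graded components and identifying the $\mu$-graded piece of $f_{\gr,!}$ over $V$ with the ordinary $f_!$ of the degree-$f^\flat(\mu)$ component---is exactly the paper's, and your left-exactness remark is fine. The gap is in the surjectivity step: you claim that the classical result, applied to $f\colon f^{-1}V \to V$, lifts \emph{every} $\mu$-graded section of $f_{\gr,!}\sheaf H$ over $V$. But the classical statement (\cite[Proposition~2.5.8]{KashiwaraSchapira:1994:SheavesOnManifolds}, proved via $(f_!\sheaf F)_y \cong \Gamma_c(f^{-1}(y),\res{\sheaf F}{f^{-1}(y)})$) asserts exactness of the sequence of \emph{sheaves} $0 \to f_!\sheaf F_\lambda \to f_!\sheaf G_\lambda \to f_!\sheaf H_\lambda \to 0$, i.e.\ surjectivity on stalks, not surjectivity of $\Gamma(V,f_!\sheaf G_\lambda) \to \Gamma(V,f_!\sheaf H_\lambda)$ for every open $V$. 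The section-level claim is genuinely stronger: already for $f = \id$ it would say that $\Gamma(V,\sheaf G_\lambda) \to \Gamma(V,\sheaf H_\lambda)$ is surjective for every open $V$ whenever $\sheaf F_\lambda$ is soft, which is what flabbiness, not softness, provides; under the standing hypotheses the spaces are merely locally compact, and softness only controls sections over compacts and compactly supported sections.

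Fortunately the stronger claim is not needed. Epimorphisms in $\catSh{Y,\Lambda_Y}$ are detected on the graded components as maps of ordinary sheaves (cokernels are computed componentwise and then sheafified, so the condition is stalkwise), hence exactness of $0 \to f_{\gr,!}\sheaf F \to f_{\gr,!}\sheaf G \to f_{\gr,!}\sheaf H \to 0$ is equivalent to exactness, as ordinary sheaves on each open $V \subseteq Y$, of
\[
    0 \to f_!\bigl((\res{\sheaf F}{f^{-1}V})_{f^\flat(\mu)}\bigr) \to f_!\bigl((\res{\sheaf G}{f^{-1}V})_{f^\flat(\mu)}\bigr) \to f_!\bigl((\res{\sheaf H}{f^{-1}V})_{f^\flat(\mu)}\bigr) \to 0,
\]
and this sheaf-level exactness is precisely what the classical proposition gives, since graded softness of $\sheaf F$ says each $(\res{\sheaf F}{f^{-1}V})_{f^\flat(\mu)}$ is soft. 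With that adjustment---replace "surjection on $\mu$-graded sections over $V$" by "surjectivity of the componentwise sheaf map"---your argument becomes the paper's proof.
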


\begin{proof}
    It suffices to show that for every $\lambda \in f^\flat(\Lambda_Y)$ the sequence
    \[
        0 \to f_!(\sheaf F_\lambda) \to f_!(\sheaf G_\lambda) \to f_!(\sheaf H_\lambda) \to 0
    \]
    is exact.
    By definition, $\sheaf F_\lambda$ is soft as an ordinary sheaf, so this assertion is classical, see \cite[Proposition~2.5.8]{KashiwaraSchapira:1994:SheavesOnManifolds}.
\end{proof}

\subsection{Identities for derived functors}

As in the ungraded setting, one sees that if $f\colon X \to Y$ is a morphism of graded topological spaces and $\sheaf F \in \catSh{X,\Lambda_X}$ is flabby (resp.~soft), then so is $f_{\gr,*}\sheaf F$ (resp.~$f_{\gr,!}\sheaf F$).
If $f\colon X \to Y$ and $g \colon Y \to Z$ are two morphisms of graded topological spaces, then $\RR f_{\gr,*}$ (resp.~$\RR f_{\gr,!}$) can be computed via flabby (soft) resolutions.
Thus $\RR(g \circ f)_{\gr,*} \cong \RR g_{\gr,*} \circ \RR f_{\gr,*}$ and $\RR(g \circ f)_{\gr,!} \cong \RR g_{\gr,!} \circ \RR f_{\gr,!}$.
From the following lemma it then follows that also $\LL(g \circ f)_{\gr}^* \cong \LL f_{\gr}^* \circ \LL g_{\gr}^*$.

\begin{Lemma}
    Let $f\colon X \to Y$ be a morphism of graded ringed topological spaces.
    Then for $\sheaf F \in \catDSh[+]{X,\Lambda_X,\sheaf R_X}$ and $\sheaf G \in \catDSh[-]{Y,\Lambda_Y,\sheaf R_Y}$ there exists natural isomorphisms
    \begin{align*}
        \RsheafHom_{\sheaf R_Y}(\sheaf G, \RR f_{\gr,*}\sheaf F) &\cong \RR f_{\gr,*}\sheafHom_{\sheaf R_X}(\LL f_{\gr}^{*}\sheaf G, \sheaf F),
        \intertext{and}
        \RHom_{\sheaf R_Y}(\sheaf G, \RR f_{\gr,*}\sheaf F) & \cong \RHom_{\sheaf R_X}(\LL f_{\gr}^{*}\sheaf G, \sheaf F).
        \intertext{In particular,}
        \Hom_{\catDSh{Y,\Lambda_Y,\sheaf R_Y}}(\sheaf G, \RR f_{\gr,*}\sheaf F) &\cong \Hom_{\catDSh{X,\Lambda_X,\sheaf R_X}}(\LL f_{\gr}^{*}\sheaf G, \sheaf F)
    \end{align*}
    and $\LL f_{\gr}^{*}$ is left adjoint to $\RR f_{\gr,*}$.
\end{Lemma}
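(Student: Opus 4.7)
The plan is to bootstrap from the underived Lemma~\ref{lem:module-adjunction} by passing to suitable resolutions. First I would take a K-flat resolution $\sheaf P^\bullet \to \sheaf G$ and a K-injective resolution $\sheaf F \to \sheaf I^\bullet$; both exist by virtue of the symmetric monoidal model structure of Proposition~\ref{prop:monoidal-model-structure}, whose cofibrant replacements are K-flat (since the generators $\sheaf R_U\langle\lambda\rangle$ form a flat family) and whose fibrant replacements are K-injective. Applying Lemma~\ref{lem:module-adjunction} termwise to the associated bicomplex yields a natural isomorphism of complexes of sheaves on $Y$,
\[
    \sheafHom_{\sheaf R_Y}(\sheaf P^\bullet,\, f_{\gr,*}\sheaf I^\bullet) \isoto f_{\gr,*}\sheafHom_{\sheaf R_X}(f_{\gr}^*\sheaf P^\bullet,\, \sheaf I^\bullet),
\]
and the task is to identify each side with the derived functor expression in the statement.

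For the right-hand side the identifications come in three pieces: K-flatness of $\sheaf P^\bullet$ gives $f_{\gr}^*\sheaf P^\bullet \simeq \LL f_{\gr}^*\sheaf G$; K-injectivity of $\sheaf I^\bullet$ gives $\sheafHom_{\sheaf R_X}(-,\sheaf I^\bullet) \simeq \RsheafHom_{\sheaf R_X}(-,\sheaf F)$; and the resulting complex is termwise flabby by the lemma showing $\sheafHom_{\sheaf R_X}(-,\sheaf J)$ is flabby whenever $\sheaf J$ is injective, so applying $f_{\gr,*}$ computes $\RR f_{\gr,*}$. For the left-hand side one notes that $f_{\gr,*}\sheaf I^\bullet$ is flabby (since $f_{\gr,*}$ preserves flabbiness, as recorded at the beginning of this subsection) and moreover K-injective on $Y$, so that together with K-flatness of $\sheaf P^\bullet$ the left-hand side computes $\RsheafHom_{\sheaf R_Y}(\sheaf G,\RR f_{\gr,*}\sheaf F)$.

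The $\RHom$ identity then follows by applying $\RR p_{\gr,*}$ for $p\colon Y \to (\pt,0)$ to the $\RsheafHom$ isomorphism, using the remark after Definition~\ref{def:graded-sheaf-pushforward} that $p_{\gr,*}\sheafHom_{\sheaf R_Y} = \Hom_{\sheaf R_Y}$, together with the composition identity $\RR(p \circ f)_{\gr,*} \cong \RR p_{\gr,*} \circ \RR f_{\gr,*}$ established at the opening of this subsection. The adjunction statement at the level of derived-category $\Hom$-sets is simply $H^0$ of this $\RHom$ identity.

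The main obstacle is verifying that $f_{\gr,*}$ preserves K-injectivity, which is what makes the left-hand side of the termwise identity represent $\RsheafHom_{\sheaf R_Y}(\sheaf G,\RR f_{\gr,*}\sheaf F)$. In the ungraded setting this is a classical Spaltenstein-type result that follows formally once one knows the left adjoint $f_{\gr}^*$ preserves acyclic K-flat complexes; in the graded case the same argument should go through with only degree bookkeeping, which is essentially the content of Lemma~\ref{lem:inverse_and_tensor}. Everything else is a formal consequence of the termwise underived adjunction.
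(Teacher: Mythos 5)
Your strategy (apply Lemma~\ref{lem:module-adjunction} termwise to a K-flat resolution of $\sheaf G$ and a K-injective resolution of $\sheaf F$) is a genuinely different, Spaltenstein-style route, but the step you yourself flag as the main obstacle is a real gap, and the justification you sketch for it does not work. The homotopy-level adjunction $\Hom_{K(Y,\Lambda_Y,\sheaf R_Y)}(\sheaf A,\, f_{\gr,*}\sheaf I^\bullet) \cong \Hom_{K(X,\Lambda_X,\sheaf R_X)}(f_{\gr}^{*}\sheaf A,\, \sheaf I^\bullet)$ only shows that $f_{\gr,*}\sheaf I^\bullet$ kills those acyclic complexes $\sheaf A$ for which $f_{\gr}^{*}\sheaf A$ is again acyclic. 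Since $f_{\gr}^{*} = f_{\gr}^{-1}({-}) \otimes_{f_{\gr}^{-1}\sheaf R_Y} \sheaf R_X$ is only right exact, this is guaranteed for K-flat acyclic $\sheaf A$ (here Lemma~\ref{lem:inverse_and_tensor} is indeed relevant) but not for arbitrary acyclic $\sheaf A$. So the formal argument yields only that $f_{\gr,*}\sheaf I^\bullet$ is \emph{weakly} K-injective, not K-injective; Lemma~\ref{lem:inverse_and_tensor} cannot close that gap. Weak K-injectivity would in fact suffice for you, because you put the K-flat complex $\sheaf P^\bullet$ in the first variable --- but then you owe a further lemma, proved nowhere in the paper and not trivial, that $\sheafHom_{\sheaf R_Y}$ of a K-flat complex into a weakly K-injective complex computes $\RsheafHom_{\sheaf R_Y}$, and that this persists after restriction to open subsets so that the sheaf-level statement follows. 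As written, your identification of the left-hand side is therefore unproven.

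The paper sidesteps this issue, and given the boundedness hypotheses ($\sheaf F$ bounded below, $\sheaf G$ bounded above) the unbounded machinery is unnecessary: one first uses the derived tensor-hom adjunction coming from Proposition~\ref{prop:monoidal-model-structure} to split off the extension of scalars along $f_{\gr}^{-1}\sheaf R_Y \to \sheaf R_X$, reducing to the case $\sheaf R_X = f_{\gr}^{-1}\sheaf R_Y$. In that case $f_{\gr,*}$ has the \emph{exact} left adjoint $f_{\gr}^{-1}$, hence sends injectives to injectives, so both sides are computed by one and the same bounded-below injective resolution of $\sheaf F$ together with the underived adjunction of Lemmas~\ref{lem:sheaf-adjunction} and~\ref{lem:module-adjunction}. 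If you want to keep your resolution-based argument, either insert this same reduction before claiming any preservation of K-injectivity, or prove the weak-K-injectivity statement indicated above; also note that your flabby-acyclicity argument on the right-hand side needs the resolutions chosen bounded (possible by Lemma~\ref{lem:enough_flats}), since termwise flabbiness alone does not compute $\RR f_{\gr,*}$ of an unbounded complex.
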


\begin{proof}
    By tensor-hom adjunction (Proposition~\ref{prop:monoidal-model-structure}) we can reduce to $\sheaf R_X = f^{-1}_{\gr}\sheaf R_Y$.
    By adjunction the functor $f_{\gr,*}\colon \catSh{X,\Lambda_X,f_{\gr}^{-1}\sheaf R_Y} \to \catSh{Y,\Lambda_Y,\sheaf R_{\Lambda_Y}}$ sends injective modules to injective modules.
    Thus both sides are computed via the same derived functor.
\end{proof}

\begin{Lemma}\label{lem:basic_triangle}
    Let $j\colon (U,\res{\Lambda}U) \hookrightarrow (X,\Lambda)$ be an open subset with complement $i\colon (Z, \res{\Lambda}{Z}) \hookrightarrow (X,\Lambda)$.
    Then for any $\sheaf F \in \catDSh[+]{X,\Lambda}$ there exits a distinguished triangle
    \[
        \RR j_{\gr,!}j_{\gr}^{-1}\sheaf F \to \sheaf F \to \RR i_{\gr,!}i_{\gr}^{-1}\sheaf F.
    \]
\end{Lemma}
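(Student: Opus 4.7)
The plan is to reduce the statement to the underived short exact sequence of Lemma~\ref{lem:module-basic-triangle} by identifying the two outer terms of the triangle with $\sheaf F_U$ and $\sheaf F_Z$ respectively, and then showing that the relevant functors are already exact so that no honest derivation is needed.

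First I would show that for the open inclusion $j\colon U \hookrightarrow X$ there is a natural isomorphism $j_{\gr,!}j_{\gr}^{-1}\sheaf F \cong \sheaf F_U$, and that for the closed inclusion $i\colon Z \hookrightarrow X$ there is a natural isomorphism $i_{\gr,!}i_{\gr}^{-1}\sheaf F \cong \sheaf F_Z$. In the ungraded setting these are standard; in the graded setting one simply unpacks the definitions. For $j_{\gr,!}$, a local section of $j_{\gr,!}j_{\gr}^{-1}\sheaf F$ over an open $V$ in degree $\lambda$ is a section of $\res{\sheaf F}{V \cap U}$ in degree $\res{\lambda}{V\cap U}$ whose support is closed in $V$; this is exactly the description of $\sheaf F_U$. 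For $i_{\gr,!} = i_{\gr,*}$ (since $i$ is proper), the identification with $\sheaf F_Z$ is immediate from $\res{(\sheaf F_Z)}{X\setminus Z} = 0$ and $\res{(\sheaf F_Z)}Z = \res{\sheaf F}Z = i_{\gr}^{-1}\sheaf F$.

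Next I would observe that both of the composite functors appearing in the triangle are exact on the abelian category $\catSh{X,\Lambda}$. The pullbacks $j_{\gr}^{-1}$ and $i_{\gr}^{-1}$ are exact in general, $j_{\gr,!}$ is exact because for an open embedding it is just extension by zero (equivalently, apply Lemma~\ref{lem:module-basic-triangle} with $Y = U$), and $i_{\gr,!} = i_{\gr,*}$ is exact because closed pushforward is. Hence $\RR j_{\gr,!}j_{\gr}^{-1} = j_{\gr,!}j_{\gr}^{-1}$ and $\RR i_{\gr,!}i_{\gr}^{-1} = i_{\gr,!}i_{\gr}^{-1}$ on all of $\catDSh[+]{X,\Lambda}$.

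Given these identifications, I would choose a complex representative of $\sheaf F$ and apply Lemma~\ref{lem:module-basic-triangle} termwise. Since $\sheaf F \mapsto \sheaf F_U$ and $\sheaf F \mapsto \sheaf F_Z$ are exact, the lemma yields a termwise short exact sequence of complexes
\[
    0 \to (\sheaf F^\bullet)_U \to \sheaf F^\bullet \to (\sheaf F^\bullet)_Z \to 0,
\]
which is precisely a short exact sequence $0 \to j_{\gr,!}j_{\gr}^{-1}\sheaf F^\bullet \to \sheaf F^\bullet \to i_{\gr,!}i_{\gr}^{-1}\sheaf F^\bullet \to 0$. Passing to the derived category converts this into the desired distinguished triangle. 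The only step requiring any care is the identification of $j_{\gr,!}j_{\gr}^{-1}\sheaf F$ with $\sheaf F_U$ at the graded level, in particular checking that the grading conditions defining $j_{\gr,!}$ match the extension-by-zero presentation of $\sheaf F_U$; once this is in hand, the rest of the argument is formal.
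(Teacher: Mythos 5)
Your argument is correct, but it handles the derived functors differently from the paper. The paper never claims that $j_{\gr,!}$ and $i_{\gr,!}$ are exact; instead it resolves $\sheaf F$ by soft sheaves, notes that $j_{\gr}^{-1}$ and $i_{\gr}^{-1}$ preserve softness and that soft sheaves are acyclic for the graded lower-shriek functors (Lemma~\ref{lem:soft-is-!-acyclic}), and then applies the underived sequence of Lemma~\ref{lem:module-basic-triangle} termwise to the soft resolution. You instead argue that for the \emph{strict} open and closed inclusions the functors $j_{\gr,!}$ and $i_{\gr,!}=i_{\gr,*}$ are themselves exact, so that $\RR j_{\gr,!}j_{\gr}^{-1}$ and $\RR i_{\gr,!}i_{\gr}^{-1}$ coincide with their underived versions and Lemma~\ref{lem:module-basic-triangle} can be applied termwise to an arbitrary representative. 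This works: because the gradings on $U$ and $Z$ are the restricted ones, the graded stalks of $j_{\gr,!}\sheaf G$ and $i_{\gr,*}\sheaf G$ are $\sheaf G_x$ at points of the subspace and $0$ elsewhere (the pathologies of Remark~\ref{rem:graded_pushforward_finitness} do not arise for strict inclusions), so exactness follows, and your route even yields the slightly sharper statement that the outer terms of the triangle need no deriving. What it costs is exactly this extra verification, which the paper's soft-resolution argument sidesteps by reusing machinery it has already set up. One small point of care: your parenthetical that exactness of $j_{\gr,!}$ follows from Lemma~\ref{lem:module-basic-triangle} with $Y=U$ only gives exactness of the composite $\sheaf F \mapsto \sheaf F_U$ on sheaves on $X$; to replace $\RR j_{\gr,!}$ by $j_{\gr,!}$ you need exactness of $j_{\gr,!}$ as a functor on $\catSh{U,\res{\Lambda}{U}}$ (your primary justification, the graded stalk computation for extension by zero, does give this, or alternatively note that every graded sheaf on $(U,\res{\Lambda}{U})$ is of the form $j_{\gr}^{-1}$ of a sheaf on $X$).
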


\begin{proof}
    Since the restrictions preserve softness, it suffices to show that for any soft sheaf $\sheaf F$ we have a short exact sequence
    \[
        0 \to j_{\gr,!}j_{\gr}^{-1}\sheaf F \to \sheaf F \to i_{\gr,!}j_{\gr}^{-1}\sheaf F \to 0.
    \]
    This follows from Lemma~\ref{lem:module-basic-triangle}.
\end{proof}

\begin{Proposition}[Projection formula]\label{prop:projection_formula}
    Let $f\colon (X,\Lambda_X,\sheaf R_X) \to (Y, \Lambda_Y, \sheaf R_Y)$ be a morphism of graded ringed spaces.
    Let $\sheaf F \in \catDSh[+]{X,\Lambda_X,\sheaf R_X}$ and $\sheaf G \in \catDSh[+]{Y,\Lambda_Y,\sheaf R_Y}$ and assume that $\sheaf G$ has a finite flat resolution.
    Then there exists a canonical isomorphism
    \[
        (\RR f_{\gr,!} \sheaf F) \Lotimes_{\sheaf R_Y} \sheaf G \isoto \RR f_{\gr,!}(\sheaf F \Lotimes_{\sheaf R_X} \LL f_{\gr}^* \sheaf G).
    \]
\end{Proposition}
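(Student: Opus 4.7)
The plan is to construct the canonical morphism at the underived level, derive it using appropriate resolutions, and then verify it is an isomorphism on stalks by reducing via base change to the case of a single point, where the finite flat resolution hypothesis admits a direct verification.

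First I would construct the underived projection morphism $(f_{\gr,!}\sheaf F) \otimes_{\sheaf R_Y} \sheaf G \to f_{\gr,!}(\sheaf F \otimes_{\sheaf R_X} f_{\gr}^*\sheaf G)$ at the presheaf level: a section $s \in \Gamma(V, f_{\gr,!}\sheaf F)_\mu$ represents a section of $\sheaf F$ over $f^{-1}V$ whose support is proper over $V$, and for $t \in \Gamma(V, \sheaf G)_\nu$ the element $s \otimes f_{\gr}^{-1}t$ lives in $\sheaf F \otimes_{\sheaf R_X} f_{\gr}^*\sheaf G$ on $f^{-1}V$ with support still contained in $\supp s$. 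To derive, I would resolve $\sheaf F$ by a soft complex (which computes $\RR f_{\gr,!}$ by Lemma~\ref{lem:soft-is-!-acyclic}) and resolve $\sheaf G$ by the assumed finite flat resolution (so that the tensor products and $\LL f_{\gr}^*$ become termwise and remain in $\catDSh[+]{}$).

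Next, to verify that the derived morphism is an isomorphism, I would check the statement on stalks at each $y \in Y$. Combining Lemma~\ref{lem:graded_baby_basechange} with the preservation of softness under restriction (and with the $f_{\gr,!}$-acyclicity of soft sheaves from Lemma~\ref{lem:soft-is-!-acyclic}) upgrades the base-change identity to
\[
    (\RR f_{\gr,!}\sheaf H)_y \cong \RR\Gamma_c\bigl(f^{-1}(y),\, \res{\sheaf H}{f^{-1}(y)}\bigr)
\]
for any $\sheaf H \in \catDSh[+]{X,\Lambda_X,\sheaf R_X}$. Applying this to both sides of the projection morphism and using Lemma~\ref{lem:inverse_and_tensor} to commute pullback past the tensor product, one identifies $\res{\LL f_{\gr}^*\sheaf G}{f^{-1}(y)}$ with $\LL\pi^* \sheaf G_y$, where $\pi\colon f^{-1}(y)\to\pt$. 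This reduces the problem to the projection formula for $\pi$ with coefficient complex $\sheaf F' = \res{\sheaf F}{f^{-1}(y)}$ and the finite-flat graded $\sheaf R_{Y,y}$-complex $\sheaf G_y$.

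In this reduced setting, the finite flat resolution of $\sheaf G_y$ permits a dévissage along its (bounded) terms, reducing to the case where $\sheaf G_y$ is a single flat graded $\sheaf R_{Y,y}$-module $M$. Writing $M$ as a filtered colimit of finitely generated free graded modules (Lazard's theorem, which carries over to the graded setting) and using that $\Gamma_c$ on a locally compact space commutes with filtered colimits, we further reduce to the trivial case $M = \sheaf R_{Y,y}\langle\lambda\rangle$, where both sides are visibly $\RR\Gamma_c(f^{-1}(y),\sheaf F')\langle\lambda\rangle$ and the morphism is the identity. The main obstacle is the derived base-change step: one has to ensure that a soft resolution of $\sheaf F$ restricts to a soft resolution on each fibre $f^{-1}(y)$ so that Lemma~\ref{lem:graded_baby_basechange} really does compute the derived stalk, and that this identification respects the tensor and grading structures used in the formula.
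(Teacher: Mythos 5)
Your argument is correct in outline, but it is organized quite differently from the paper's proof. The paper first strips off the change of rings via the identity $\sheaf F \Lotimes_{\sheaf R_X} \LL f_{\gr}^*\sheaf G \cong \sheaf F \Lotimes_{f_{\gr}^{-1}\sheaf R_Y} f_{\gr}^{-1}\sheaf G$, then proves the \emph{underived} formula for flat $\sheaf G$ by splitting $f$ into a strict part (where $\Lambda_X = f^{-1}\Lambda_Y$ and the ungraded proof of Kashiwara--Schapira/Iversen adapts directly) and a same-space regrading part (where the only content is checking that the gradings match), and finally derives the statement using a flat resolution of $\sheaf G$ together with the graded analogue of the fact that soft resolutions compute both sides. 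You instead work in the derived category from the start and verify the map stalkwise via derived base change to the fibres, reducing to the projection formula over a point, which you prove by d\'evissage, graded Lazard and commutation of compactly supported cohomology with filtered colimits; this essentially unfolds, in the graded setting, the ungraded argument the paper cites. The paper's factorization buys that all grading bookkeeping is isolated in the trivial $X=Y$ case and that neither derived fibrewise base change nor a graded Lazard theorem has to be established; your route buys a self-contained verification at the point, at the cost of the extra inputs you flag (softness preserved under restriction to fibres, degreewise commutation of $\Gamma_c$ with filtered colimits, graded Lazard). Two points to watch: your displayed stalk formula omits the regrading pushforward $(\res{f_1}{f^{-1}(y)})_{\gr,*}$ from Lemma~\ref{lem:graded_baby_basechange} --- the right-hand side must be read as $\RR\pi_{\gr,!}$ for the morphism of graded ringed spaces $\pi\colon f^{-1}(y) \to (\pt,\,\Lambda_{Y,y},\,\sheaf R_{Y,y})$, since when $f^\flat$ is not injective the regrading genuinely changes the answer --- and in the Lazard step it is safer to allow finitely generated projective graded modules (retracts of finite sums of shifts $\sheaf R_{Y,y}\langle\lambda\rangle$), for which the free case still suffices by passing to direct summands.
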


\begin{proof}
    Let us first assume that $\sheaf R_X = f_{\gr}^{-1}\sheaf R_Y$.

    Assume further that $\sheaf G$ is a flat $\sheaf R_Y$-module and $\Lambda_X = f^{-1}\Lambda_Y$.
    Then one shows that $f_{\gr,!}\sheaf F \otimes_{\sheaf R_Y} \sheaf G \isoto f_{\gr,!}(\sheaf F \otimes_{\sheaf R_X} f_{\gr}^{-1} \sheaf G)$ with a direct adaptation of the proof in the ungraded case, see~\cite[\nopp VII.2.4]{Iversen:1986:CohomologyOfSheaves} or \cite[Proposition~2.5.13]{KashiwaraSchapira:1994:SheavesOnManifolds}.
    On the other hand, if $X = Y$, then it is a simple matter to check that the gradings on the two sides match.

    Further, still assuming that $\sheaf G$ is flat, \cite[Lemma~2.5.12]{KashiwaraSchapira:1994:SheavesOnManifolds} (whose proof again upgrades to the graded setting) implies that both derived functors are computed by a soft resolution of $\sheaf F$, and hence agree.
    Resolving a general $\sheaf G$ by flat sheaves, the derived statement follows in the case that $\sheaf R_X = f_{\gr}^{-1}\sheaf R_Y$.

    The general case then follows from
    \begin{align*}
        \RR f_{\gr,!}(\sheaf F \Lotimes_{\sheaf R_X} \LL f_{\gr}^* \sheaf G) & =
        \RR f_{\gr,!}(\sheaf F \Lotimes_{\sheaf R_X} \sheaf R_X \Lotimes_{f_{\gr}^{-1} \sheaf R_Y} f_{\gr}^{-1} \sheaf G) \\ &=
        \RR f_{\gr,!}(\sheaf F  \Lotimes_{f_{\gr}^{-1} \sheaf R_Y} f_{\gr}^{-1} \sheaf G).
        \qedhere
    \end{align*}
\end{proof}

Similarly one upgrades Proposition~\ref{prop:underived_base_change} to a derived statement (compare \cite[Proposition~2.6.7]{KashiwaraSchapira:1994:SheavesOnManifolds}):

\begin{Proposition}
    Consider a cartesian square
    \[
        \begin{tikzcd}
            Z \arrow[r, "\tilde g"] \arrow[d,"\tilde f"] & Y_1 \arrow[d, "f"] \\
            Y_2 \arrow[r, "g"] & X
        \end{tikzcd}
    \]
    of graded spaces.
    Then there exists a canonical isomorphism
    \[
        g_{\gr}^{-1} \circ \RR f_{\gr,!} \isoto \tilde \RR f_{\gr,!} \circ \tilde g_{\gr}^{-1}.
    \]
\end{Proposition}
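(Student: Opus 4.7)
The plan is to reduce the derived statement to the underived base change (Proposition~\ref{prop:underived_base_change}) by producing, for any $\sheaf F \in \catDSh[+]{Y_1,\Lambda_{Y_1}}$, a resolution that simultaneously computes both sides of the asserted isomorphism. Concretely, I would choose a soft resolution $\sheaf F \to \sheaf F^\bullet$ (which exists since every injective sheaf is soft). Since $g_{\gr}^{-1}$ is exact and $\sheaf F^\bullet$ is an $f_{\gr,!}$-acyclic resolution by Lemma~\ref{lem:soft-is-!-acyclic}, applying Proposition~\ref{prop:underived_base_change} termwise yields
\[
    g_{\gr}^{-1} \RR f_{\gr,!} \sheaf F \;\cong\; g_{\gr}^{-1} f_{\gr,!} \sheaf F^\bullet \;\cong\; \tilde f_{\gr,!} \tilde g_{\gr}^{-1} \sheaf F^\bullet.
\]
The remaining task is to show that the right-hand complex computes $\RR \tilde f_{\gr,!} \tilde g_{\gr}^{-1} \sheaf F$.

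Since $\tilde g_{\gr}^{-1}$ is exact, $\tilde g_{\gr}^{-1}\sheaf F^\bullet$ is a resolution of $\tilde g_{\gr}^{-1}\sheaf F$, so everything reduces to checking that each term $\tilde g_{\gr}^{-1}\sheaf F^j$ is $\tilde f_{\gr,!}$-acyclic. I would verify this by computing the stalks of the higher direct images and showing they vanish. Fix $y_2 \in Y_2$ and a soft resolution $\tilde g_{\gr}^{-1}\sheaf F^j \to \sheaf K^\bullet$ on $Z$. By Lemma~\ref{lem:graded_baby_basechange}, the stalk of $\tilde f_{\gr,!}\sheaf K^i$ at $y_2$ is $\Gamma_c(\tilde f^{-1}(y_2),\sheaf K^i|_{\tilde f^{-1}(y_2)})$, and taking cohomology gives $(R^i\tilde f_{\gr,!})(\tilde g_{\gr}^{-1}\sheaf F^j)_{y_2} = H^i\Gamma_c(\tilde f^{-1}(y_2),\sheaf K^\bullet|_{\tilde f^{-1}(y_2)})$.

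Next I would use Lemma~\ref{lem:graded_fiber_product} to identify $\tilde f^{-1}(y_2)$ with $f^{-1}(g(y_2))$ as graded topological spaces, under which $(\tilde g_{\gr}^{-1}\sheaf F^j)|_{\tilde f^{-1}(y_2)} \cong \sheaf F^j|_{f^{-1}(g(y_2))}$. Restriction to a locally closed subset preserves softness, so the right side is soft and hence $\Gamma_c$-acyclic; therefore $\sheaf K^\bullet|_{\tilde f^{-1}(y_2)}$ is a soft resolution of a soft sheaf and its $\Gamma_c$-cohomology in degree $i > 0$ vanishes. This establishes the desired acyclicity and concludes the argument.

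The main obstacle I anticipate is the final fiberwise identification: one must unwind the pushout definition of $\Lambda_Z$ from Lemma~\ref{lem:graded_fiber_product} to see that the grading sheaves on $\tilde f^{-1}(y_2)$ coming from $\Lambda_Z$ and from $\Lambda_{Y_1}$ via the homeomorphism agree, and verify that $\tilde g_{\gr}^{-1}\sheaf F^j$ really does restrict on the fiber to $\sheaf F^j|_{f^{-1}(g(y_2))}$ with the correct degree structure. Once this bookkeeping is in place, all remaining ingredients are graded analogues of standard sheaf-theoretic facts already set up in the preceding sections, paralleling \cite[Proposition~2.6.7]{KashiwaraSchapira:1994:SheavesOnManifolds}.
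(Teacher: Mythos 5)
Your argument is correct and is essentially the paper's own proof: the paper simply says to upgrade Proposition~\ref{prop:underived_base_change} to the derived level as in \cite[Proposition~2.6.7]{KashiwaraSchapira:1994:SheavesOnManifolds}, which is exactly your soft-resolution argument combined with the fiberwise acyclicity check via Lemma~\ref{lem:graded_baby_basechange} and Lemma~\ref{lem:graded_fiber_product}. The grading bookkeeping you flag at the end is indeed the only point where the graded setting requires care, and it is handled by the same lemmas.
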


\begin{Remark}
    This base change isomorphism does not upgraded to a base change isomorphism for \emph{ringed} graded spaces.
    This is simply because base change doesn't even hold for general morphisms of ungraded ringed spaces (e.g.~complex (analytic) varieties).
\end{Remark}

\section{Poincar\'e--Verdier duality}\label{sec:duality}

Throughout this section we will assume that all rings are noetherian.

Recall that a topological space $X$ has cohomological dimension at most $n$ if $H^k(\Gamma_c(X,\sheaf F)) = 0$ for all $\sheaf F \in \catSh{X}$ and all $k \ge n$.

\begin{Definition}
    A graded topological space $(X,\Lambda)$ has cohomological dimension at most $n$ if the underlying topological space has cohomological dimension at most $n$.
\end{Definition}

\begin{Lemma}\label{lem:coh_dim_and_soft_sequence}
    A graded space $(X,\Lambda)$ has cohomological dimension at most $n$ if and only if for any exact sequence
    \[
        0 \to \sheaf F_0 \to \sheaf F_1 \to \dotsb \to \sheaf F_{n+1} \to 0
    \]
    in $\catSh{X,\, \Lambda}$, if $\sheaf F_1, \dotsc, \sheaf F_n$ are soft then so is $\sheaf F_{n+1}$.
\end{Lemma}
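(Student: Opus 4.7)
The plan is to reduce both directions of the equivalence to the classical ungraded statement \cite[Proposition~3.2.2]{KashiwaraSchapira:1994:SheavesOnManifolds} by exploiting the degree-wise structure of $\catSh{X,\Lambda}$. Two elementary observations will act as hinges. First, exactness in $\catSh{X,\Lambda}$ can be checked on graded stalks, so for every open $U \subseteq X$ and every $\lambda \in \Lambda(U)$ the functor $\sheaf G \mapsto (\res{\sheaf G}{U})_\lambda$ is exact as a functor to ordinary sheaves on $U$. Second, by the paper's definition of graded softness, the degree-$\lambda$ component of a soft graded sheaf is automatically soft as an ordinary sheaf; in particular the degree-$0$ subsheaf is soft.

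For the forward direction, I will fix any open $U \subseteq X$ and any $\lambda \in \Lambda(U)$, restrict the given sequence to $U$ and pass to degree $\lambda$, obtaining an exact sequence of ordinary sheaves on $U$ whose $n$ middle terms are soft. The space $U$ inherits cohomological dimension at most $n$ from $X$, since extension by zero along the open inclusion $j\colon U \hookrightarrow X$ identifies $R\Gamma_c(U,-)$ with $R\Gamma_c(X, j_!(-))$ and $j_!$ is exact. The classical ungraded version of the lemma then applies on $U$, giving softness of $(\res{\sheaf F_{n+1}}{U})_\lambda$. Since $U$ and $\lambda$ were arbitrary, $\sheaf F_{n+1}$ is soft.

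For the converse, I will start with an arbitrary ordinary sheaf $\sheaf F \in \catSh{X}$ and enhance it to a graded sheaf $\tilde{\sheaf F}$ placed entirely in degree $0 \in \Lambda(U)$ for every open $U$. Using enough injectives in $\catSh{X,\Lambda}$, pick an injective (hence soft) resolution $\tilde{\sheaf F} \hookrightarrow \sheaf I^\bullet$ and truncate to an exact sequence
\[
    0 \to \tilde{\sheaf F} \to \sheaf I^0 \to \dotsb \to \sheaf I^{n-1} \to \sheaf Z^n \to 0,
\]
with $\sheaf Z^n = \ker(\sheaf I^n \to \sheaf I^{n+1})$. The hypothesis, applied to this sequence with its $n$ soft middle terms, produces softness of $\sheaf Z^n$. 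Extracting the degree-$0$ summand then gives an ungraded soft resolution of $\sheaf F$ of length $n+1$, so Lemma~\ref{lem:soft-is-!-acyclic} (in its ungraded version) lets me compute $R\Gamma_c(X,\sheaf F)$ from this resolution and conclude $H^k \Gamma_c(X,\sheaf F) = 0$ for $k > n$.

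The main subtlety, rather than any deep obstruction, lies in the bookkeeping between graded and ungraded worlds: the ungraded statement must be invoked on the local space $U$ (because a section $\lambda \in \Lambda(U)$ need not extend to a global section of $\Lambda$), and one must use the paper's degree-wise definition of softness to move freely between the two settings. Once this translation is in hand, both implications reduce to the classical dimension-shifting argument and require no new content.
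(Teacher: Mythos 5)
Your proposal is correct and takes essentially the same route as the paper: the paper's (very terse) proof likewise reduces to the classical ungraded statement by passing to the degree-$\lambda$ component sheaves $\sheaf F_{\bullet,\lambda}$, which is exactly your forward direction, and the converse amounts to viewing an ordinary sheaf as a graded sheaf concentrated in degree $0$, as you do. The only difference is cosmetic: for the converse you re-run the standard dimension-shifting argument with an injective resolution instead of just transferring the soft-prolongation property to ungraded sheaves and citing the classical \enquote{if and only if}, but the content is the same.
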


\begin{proof}
    The statement is classical if $\Lambda = 0$ \cite[Proposition~III.9.9]{Iversen:1986:CohomologyOfSheaves}.
    The graded statement follows from this by considering the sequences $\sheaf F_{\bullet,\lambda}$ for $\lambda \in \Lambda$.
\end{proof}

Recall our standing assumption that all topological spaces we consider are locally compact.
The main result of this section is the following duality theorem.

\begin{Theorem}\label{thm:duality}
    Let $f\colon X \to Y$ be a morphism of graded ringed spaces.
    Assume that $X$ has finite cohomological dimension.
    Then there exists a functor $f_{\gr}^!\colon \catDSh[+]{Y,\Lambda_Y,\sheaf R_Y} \to \catDSh[+]{X,\Lambda_X,\sheaf R_X}$ right adjoint to $\RR f_{\gr,!}$.
    Moreover there exists natural isomorphisms
    \[
        \RHom_{\sheaf R_Y}(\RR f_{\gr,!} \sheaf F, \sheaf G) \cong \RHom_{\sheaf R_X}(\sheaf F, f_{\gr}^!\sheaf G)
    \]
    and
    \[
        \RsheafHom_{\sheaf R_Y}(\RR f_{\gr,!} \sheaf F, \sheaf G) \cong f_{\gr,*}\RsheafHom_{\sheaf R_X}(\sheaf F, f_{\gr}^!\sheaf G)
    \]
\end{Theorem}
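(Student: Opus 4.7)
The plan is to adapt the Kashiwara--Schapira construction of Verdier duality~\cite[Theorem~3.1.5]{KashiwaraSchapira:1994:SheavesOnManifolds} to the graded setting, using the finite cohomological dimension of $X$ in an essential way. First I would bound the cohomological amplitude of $\RR f_{\gr,!}$ by $n$, the cohomological dimension of $X$: Lemma~\ref{lem:coh_dim_and_soft_sequence} guarantees that any $\Lambda$-graded sheaf on $X$ admits a soft resolution of length at most $n+1$, and Lemma~\ref{lem:soft-is-!-acyclic} then shows that $\RR f_{\gr,!}$ can be computed by such a resolution and hence is supported in degrees $\le n$.

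The key step is to construct $f_{\gr}^!$ explicitly. I would produce a bounded complex $\sheaf K^\bullet$ of flat, soft $\sheaf R_X$-modules quasi-isomorphic to $\sheaf R_X$, by combining the flat resolution of Lemma~\ref{lem:enough_flats} with a Godement-style soft resolution and then truncating at degree $n$ using the amplitude bound. Then $\RR f_{\gr,!}\sheaf F$ is represented by $f_{\gr,!}(\sheaf F \otimes_{\sheaf R_X} \sheaf K^\bullet)$, and the resulting underived functor
\[
    T\colon \sheaf F \longmapsto f_{\gr,!}(\sheaf F \otimes_{\sheaf R_X} \sheaf K^\bullet)
\]
on chain complexes preserves small coproducts and filtered colimits, since these are computed degree-by-degree and the ungraded $f_!$ already commutes with them. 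As $\catSh{X,\Lambda_X,\sheaf R_X}$ is a Grothendieck category with the set of generators $\{\sheaf R_U\langle\lambda\rangle\}$ identified in Proposition~\ref{prop:monoidal-model-structure}, the special adjoint functor theorem yields a right adjoint to $T$, which descends to the desired functor $f_{\gr}^!\colon \catDSh[+]{Y,\Lambda_Y,\sheaf R_Y} \to \catDSh[+]{X,\Lambda_X,\sheaf R_X}$.

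The $\RHom$ isomorphism is then just the derived adjunction on Hom groups. For the $\RsheafHom$ statement I would verify the identification on each open $V \subseteq Y$: the derived base change isomorphism established just before Theorem~\ref{thm:duality} identifies $\res{(\RR f_{\gr,!}\sheaf F)}{V}$ with $\RR\tilde f_{\gr,!}\res{\sheaf F}{f^{-1}V}$, and applying the $\RHom$ version restricted to $V$, together with the full range of graded shifts, assembles the required sheaf isomorphism.

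The main obstacle is the construction in the second step: producing a bounded resolution of $\sheaf R_X$ that is simultaneously flat, soft, and compatible with the $\Lambda$-grading. The finite cohomological dimension assumption is precisely what permits the truncation to preserve softness (via Lemma~\ref{lem:coh_dim_and_soft_sequence}), and one must verify that flatness and softness can be arranged jointly degree-by-degree. A secondary point is checking that the right adjoint produced by the special adjoint functor theorem respects quasi-isomorphisms and preserves bounded-below complexes, which again relies on the amplitude bound established first.
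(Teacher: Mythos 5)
Your overall route---a bounded complex $\sheaf K^\bullet$ of flat and soft modules quasi-isomorphic to $\sheaf R_X$, an adjoint-functor/representability argument applied to $T\colon \sheaf F \mapsto f_{\gr,!}(\sheaf F \otimes_{\sheaf R_X} \sheaf K^\bullet)$, and a bootstrap to the derived statement---is essentially the paper's (and Iversen's/Kashiwara--Schapira's); the only real difference is that you invoke the special adjoint functor theorem at the level of complexes, where the paper proves an explicit representability lemma (Lemma~\ref{lem:representability_criterion}) using the generators $\sheaf R_U\langle\lambda\rangle$, and that difference is cosmetic.

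There is, however, a genuine gap at the step you treat as automatic. To know that $f_{\gr,!}(\sheaf F \otimes_{\sheaf R_X} \sheaf K^\bullet)$ computes $\RR f_{\gr,!}\sheaf F$ for an \emph{arbitrary} $\sheaf F$, and to have a right adjoint at all, you need each $\sheaf F \otimes_{\sheaf R_X} \sheaf K^i$ to be soft, i.e.\ $f_{\gr,!}$-acyclic; softness of $\sheaf K^i$ alone does not give this, since softness is not preserved by tensoring in general. This is exactly the content of the paper's Lemma~\ref{lem:duality_representability}: resolve $\sheaf F$ by direct sums of sheaves $\sheaf R_U\langle\lambda\rangle$ as in Lemma~\ref{lem:enough_flats}, use flatness of $\sheaf K^i$ to keep the sequence exact, observe that each term of the tensored sequence is a direct sum of shifts of restrictions of $\sheaf K^i$ (hence soft), and apply Lemma~\ref{lem:coh_dim_and_soft_sequence} a second time. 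This is the technical heart of the proof and the second place where finite cohomological dimension enters; your plan never addresses it. The omission also undercuts your adjoint-functor step: preserving coproducts and filtered colimits is not enough for the (dual) special adjoint functor theorem---you must preserve all small colimits, hence cokernels, i.e.\ $T$ must be right exact, which is precisely the missing softness statement (note that $f_{\gr,!}$ itself preserves coproducts and filtered colimits yet has no abelian-level right adjoint). The same exactness is what makes the resulting adjoint send injectives to injectives and hence descend correctly to $\catDSh[+]{Y,\Lambda_Y,\sheaf R_Y} \to \catDSh[+]{X,\Lambda_X,\sheaf R_X}$, so the ``secondary point'' you defer hinges on it as well. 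By contrast, the obstacle you single out as the main one---the bounded flat and soft resolution of $\sheaf R_X$---is comparatively routine: it is the graded version of \cite[Proposition~VI.1.3]{Iversen:1986:CohomologyOfSheaves}, where Lemma~\ref{lem:coh_dim_and_soft_sequence} gives softness of the truncation and the standing noetherian hypothesis on the rings (which your sketch does not mention) is what yields flatness of the truncated kernel.
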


The proof of Theorem~\ref{thm:duality} is roughly the same as in the ungraded setting.
We will highlight the major steps.

\begin{Lemma}\label{lem:representability_criterion}
    Let $F\colon \catSh{X,\Lambda,\sheaf R} \to \opcat{\catMod{\sheaf R(X)}}$ be an additive functor that sends colimits to limits.
    Then $F$ is representable.
\end{Lemma}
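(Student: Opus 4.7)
The plan is to adapt the proof of the analogous ungraded statement \cite[Proposition~3.1.2]{KashiwaraSchapira:1994:SheavesOnManifolds}. Concretely, I read $F$ as a contravariant additive functor $\catSh{X,\Lambda,\sheaf R} \to \catMod{\sheaf R(X)}$ sending colimits to limits, and invoke a Freyd--Yoneda representability argument built on a small generating set. The collection
\[
    \mathcal{G} = \bigl\{\sheaf R_U\langle \lambda\rangle : U \subseteq X \text{ open},\ \lambda \in \Lambda(U)\bigr\}
\]
generates $\catSh{X,\Lambda,\sheaf R}$ and is essentially small, as is implicit in the proof of Lemma~\ref{lem:enough_flats}.

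To build the candidate representing object, I form the (essentially small) category $\mathcal E$ whose objects are pairs $(\sheaf P, u)$ with $\sheaf P \in \mathcal G$ and $u \in F(\sheaf P)$, and whose morphisms $(\sheaf P, u) \to (\sheaf P', u')$ are morphisms $\phi\colon \sheaf P \to \sheaf P'$ in $\catSh{X,\Lambda,\sheaf R}$ satisfying $F(\phi)(u') = u$. Set $\sheaf G := \varinjlim_{(\sheaf P, u) \in \mathcal E} \sheaf P$, computed in $\catSh{X,\Lambda,\sheaf R}$. Because $F$ sends colimits to limits, $F(\sheaf G) = \varprojlim_{\mathcal E} F(\sheaf P)$, and the tautological family $(u)_{(\sheaf P, u)}$ is compatible by the very definition of $\mathcal E$, so it assembles into a universal element $\tilde u \in F(\sheaf G)$. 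Evaluation $\phi \mapsto F(\phi)(\tilde u)$ then produces a natural transformation $\eta\colon \Hom_{\sheaf R}({-},\sheaf G) \to F$.

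It remains to check that $\eta_{\sheaf F}$ is a bijection for every $\sheaf F$. Both $\Hom_{\sheaf R}({-},\sheaf G)$ and $F$ turn colimits (in $\sheaf F$) into limits, so, given a presentation of $\sheaf F$ as a colimit of objects of $\mathcal G$, it suffices to verify bijectivity when $\sheaf F = \sheaf P \in \mathcal G$. In that case morphisms $\sheaf P \to \sheaf G$ are exactly the components of the colimit cone for $\mathcal E$, which by construction correspond bijectively to elements of $F(\sheaf P)$, and $\eta_{\sheaf P}$ recovers these by the definition of $\tilde u$. A last check, using additivity of $F$, confirms that $\eta$ respects the $\sheaf R(X)$-module structure on both sides.

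The main technical point — and the place I expect to spend the most effort — is the appeal in the previous paragraph to a colimit presentation $\sheaf F = \varinjlim \sheaf P$ over generators: for a general $\sheaf F$ in the Grothendieck abelian category $\catSh{X,\Lambda,\sheaf R}$ such a presentation must be produced explicitly, most economically by writing $\sheaf F$ as the cokernel of a map between direct sums of elements of $\mathcal G$ (a colimit diagram involving only objects of $\mathcal G$). The graded structure introduces no new obstruction, since $\mathcal G$ already contains every shift $\sheaf R_U\langle \lambda\rangle$ and all degrees are therefore tracked automatically.
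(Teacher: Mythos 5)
Your reduction step---both $\Hom_{\sheaf R}({-},\sheaf G)$ and $F$ send colimits to limits, so bijectivity of $\eta$ need only be checked on the generators $\sheaf R_U\langle\lambda\rangle$---is fine and parallels what the paper does. The gap is in the base case. For your candidate $\sheaf G=\varinjlim_{(\sheaf P,u)\in\mathcal E}\sheaf P$, the claim that \enquote{morphisms $\sheaf P\to\sheaf G$ are exactly the components of the colimit cone} is unjustified, and it is false as a general statement about colimits: $\Hom_{\sheaf R}(\sheaf P,{-})$ does not commute with this colimit (the generators $\sheaf R_U\langle\lambda\rangle$ are not compact objects of $\catSh{X,\Lambda,\sheaf R}$, and $\mathcal E$ is not filtered), so the construction gives you no handle on $\Hom_{\sheaf R}(\sheaf P,\sheaf G)$ at all. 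What you do get is surjectivity of $\eta_{\sheaf P}$ (every $u\in F(\sheaf P)$ is hit by the cone component at $(\sheaf P,u)$), but injectivity---that two maps $\sheaf P\to\sheaf G$ with $F(\phi)(\tilde u)=F(\psi)(\tilde u)$ coincide---is precisely the universality of the weakly universal element $(\sheaf G,\tilde u)$, i.e.\ essentially the statement to be proved. This is why the classical adjoint-functor-theorem proofs do not stop at the colimit over the category of elements but go on to cut the weakly universal object down (using well-poweredness, equalizers of endomorphisms, and so on). Surjectivity alone on generators does not survive the colimit reduction, so the argument as written does not close.

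The paper avoids this issue by a different construction: it \emph{defines} the candidate representing object section-wise by $\Gamma(U,\sheaf F)_\lambda := F(\sheaf R_U\langle-\lambda\rangle)$, uses the hypothesis on $F$ (applied to the covering presentation $\bigoplus_{\alpha,\beta}\sheaf R_{U_\alpha\cap U_\beta}\langle\cdot\rangle\to\bigoplus_\alpha\sheaf R_{U_\alpha}\langle\cdot\rangle\to\sheaf R_U\langle\lambda\rangle\to 0$) to verify the sheaf axiom, and then the base case $\Hom_{\sheaf R}(\sheaf R_U\langle-\lambda\rangle,\sheaf F)\cong\Gamma(U,\sheaf F)_\lambda=F(\sheaf R_U\langle-\lambda\rangle)$ holds by construction; the general case follows by writing an arbitrary $\sheaf G$ as the canonical colimit, over pairs $(U,s)$ with $s$ a homogeneous section, of the sheaves $\sheaf R_U\langle-\deg s\rangle$. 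If you want to keep your $\sheaf G$, you would have to prove separately that it admits this section-wise description (or run a genuine special-adjoint-functor-theorem argument); as it stands, the key bijectivity on generators is asserted rather than proved.
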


\begin{proof}
    We define a presheaf $\sheaf F$ of $\Lambda$-graded $\sheaf R$-modules by $\sheaf F(U)_\lambda = F(\sheaf R_U\langle - \lambda \rangle)$ for each open subset $U \subseteq X$ and $\lambda \in \Lambda(U)$.
    Then $\sheaf F$ is a sheaf.
    Indeed, if $\{U_\alpha\}$ is an open covering of an open subset $U$ of $X$ and $\lambda \in \Lambda(U)$ we have an exact sequence
    \[
        \bigoplus_{\alpha,\beta} \sheaf R_{U_\alpha \cap U_\beta}\langle\res{\lambda}{U_\alpha \cap U_\beta}\rangle
        \to
        \bigoplus_{\alpha} \sheaf R_{U_\alpha}\langle\res{\lambda}{U_\alpha}\rangle
        \to
        \sheaf R_U\langle\lambda\rangle
        \to
        0.
    \]
    Applying $F$, we obtain
    \[
        0 \to
        F(\sheaf R_U\langle\lambda\rangle)
        \to
        \prod_{\alpha} F\bigl(\sheaf R_{U_\alpha}\langle\res{\lambda}{U_\alpha}\rangle\bigr)
        \to
        \prod_{\alpha,\beta} F\bigl(\sheaf R_{U_\alpha \cap U_\beta}\langle\res{\lambda}{U_\alpha \cap U_\beta}\rangle\bigr),
    \]
    which is just the sheaf condition for $\sheaf F$.
    
    We can write any sheaf $\sheaf G \in \catSh{X,\Lambda,\sheaf R}$ functorially as a colimit of sheaves of the form $\sheaf R_U\langle \lambda \rangle$.
    Namely, we form the category whose objects are pairs $(U,s)$ with $U \subseteq X$ open and $s$ a homogeneous element of $\sheaf G(U)$ and with a single morphism $(U,s) \to (U',s')$ if and only if $U \subseteq U'$ and $s = \res{s'}{U}$.
    For each such pair we have a map $\sheaf R_U\langle-\deg s\rangle \to \sheaf G$ defined by the section $s$.

    It follows from the assumption on $F$ that we have a natural isomorphism $\Hom(\sheaf G, \sheaf F) \cong F(\sheaf G)$.
\end{proof}

\begin{Lemma}\label{lem:duality_representability}
    Let $f\colon X \to Y$ be a morphism of ringed graded topological spaces and assume that $X$ has finite cohomological dimension.
    Then for any flat and soft $\sheaf R_X$-module $\sheaf M$ on $X$ and any $\sheaf R_Y$-module $\sheaf G$ the functor 
    \[
        \sheaf F \mapsto \Hom_{\sheaf R_Y}(f_{\gr,!}(\sheaf F \otimes_{\sheaf R_X} \sheaf M), \sheaf G)
    \]
    is representable.
\end{Lemma}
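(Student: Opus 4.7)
The plan is to verify the hypothesis of Lemma~\ref{lem:representability_criterion}. Since $\Hom_{\sheaf R_Y}({-},\,\sheaf G)$ always sends colimits in $\catSh{Y,\Lambda_Y,\sheaf R_Y}$ to limits of $\sheaf R_Y(Y)$-modules, it suffices to show that the composite
\[
    T\colon \sheaf F \longmapsto f_{\gr,!}\bigl(\sheaf F \otimes_{\sheaf R_X} \sheaf M\bigr)
\]
commutes with small colimits. I would establish this by separately proving that $T$ is exact and that $T$ preserves arbitrary direct sums; since every small colimit is built from cokernels and coproducts, these two facts together give the required property.

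For exactness, the functor $-\otimes_{\sheaf R_X}\sheaf M$ is exact by flatness of $\sheaf M$. I would then argue that $\sheaf F \otimes_{\sheaf R_X} \sheaf M$ is soft for \emph{every} $\sheaf F$, a graded analogue of \cite[Lemma~2.5.12]{KashiwaraSchapira:1994:SheavesOnManifolds}. Concretely, by the definition of softness in the graded setting it suffices to check softness of each ordinary component $(\sheaf F \otimes_{\sheaf R_X} \sheaf M)_\lambda$, which is built from tensors of graded pieces of $\sheaf F$ and $\sheaf M$ in a way that preserves softness. With this in hand, Lemma~\ref{lem:soft-is-!-acyclic} applied to the short exact sequence $0 \to \sheaf F' \otimes \sheaf M \to \sheaf F \otimes \sheaf M \to \sheaf F'' \otimes \sheaf M \to 0$ arising from any short exact sequence $0\to\sheaf F'\to\sheaf F\to\sheaf F''\to 0$ shows that $T$ is exact.

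For direct sums, I would use the stalk description of Lemma~\ref{lem:graded_baby_basechange}, which computes $(f_{\gr,!}\sheaf H)_y$ as a $\Gamma_c$ on the fibre $f^{-1}(y)$. Writing an arbitrary coproduct $\bigoplus_i \sheaf H_i$ as the filtered colimit of its finite partial sums and invoking the classical fact that $\Gamma_c$ on a locally compact Hausdorff space commutes with both finite direct sums and filtered colimits, one obtains that $\Gamma_c$ commutes with arbitrary direct sums, and hence so does $f_{\gr,!}$ (checked stalkwise). Since $-\otimes_{\sheaf R_X}\sheaf M$ preserves coproducts as a left adjoint, $T$ preserves direct sums.

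The main obstacle is the softness assertion for $\sheaf F \otimes_{\sheaf R_X} \sheaf M$: this is what makes both the exactness of $T$ and its interaction with $f_{\gr,!}$ work cleanly. Once the graded version of this softness statement is in place, the rest of the argument is a direct translation of the ungraded manipulations of $f_!$, performed one grade at a time.
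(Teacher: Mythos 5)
There is a genuine gap at precisely the step you yourself flag as the main obstacle: the softness of $\sheaf F \otimes_{\sheaf R_X} \sheaf M$ for an \emph{arbitrary} $\sheaf F$. Your justification --- that each component $(\sheaf F \otimes_{\sheaf R_X} \sheaf M)_\lambda$ is ``built from tensors of graded pieces of $\sheaf F$ and $\sheaf M$ in a way that preserves softness'' --- is not an argument. First, the graded piece of a tensor product is not a tensor product of graded pieces: it is a sheafified sum over all decompositions of $\lambda$, so softness of the pieces of $\sheaf M$ does not transfer componentwise. Second, and more fundamentally, tensoring with a soft flat sheaf does not preserve softness by any formal grade-by-grade manipulation, even in the ungraded case: if you resolve $\sheaf F$ by a sum of sheaves $\sheaf R_U\langle\lambda\rangle$ and tensor, you get a surjection from a soft sheaf onto $\sheaf F \otimes \sheaf M$, but a quotient of a soft sheaf need not be soft unless the kernel is controlled, and that kernel presents the same problem again. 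This regress is exactly what the finite cohomological dimension hypothesis is there to stop, and your sketch never uses that hypothesis --- a sign the argument cannot be complete as stated.

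The paper closes this gap as follows: by Lemma~\ref{lem:enough_flats}, $\sheaf F$ admits a resolution $\dotsb \to \sheaf F^{-1} \to \sheaf F^0 \to \sheaf F \to 0$ whose terms are direct sums of sheaves $\sheaf R_U\langle\lambda\rangle$; tensoring with $\sheaf M$ keeps this exact by flatness, and each $\sheaf F^i \otimes_{\sheaf R_X} \sheaf M$ becomes a direct sum of shifted restrictions (extended by zero) of $\sheaf M$, hence soft; truncating after $n$ steps and applying Lemma~\ref{lem:coh_dim_and_soft_sequence} (this is where finite cohomological dimension enters) yields softness of $\sheaf F \otimes_{\sheaf R_X} \sheaf M$. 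With that in place your remaining structure is the same as the paper's: the reduction via Lemma~\ref{lem:representability_criterion}, exactness of $\sheaf F \mapsto f_{\gr,!}(\sheaf F \otimes_{\sheaf R_X} \sheaf M)$ via Lemma~\ref{lem:soft-is-!-acyclic}, and compatibility with direct sums (your stalkwise argument via Lemma~\ref{lem:graded_baby_basechange} and filtered colimits is a reasonable way to make precise what the paper dismisses as ``as in the ungraded case''). So the skeleton is right, but the central softness claim needs the resolution-plus-finite-dimension argument, not an appeal to componentwise preservation.
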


\begin{proof}
    By Lemma~\ref{lem:representability_criterion}, it suffices to show that the functor $\sheaf F \mapsto f_{\gr,!}(\sheaf F \otimes_{\sheaf R_X} \sheaf M)$ commutes with colimits.
    As in the ungraded case the functor commutes with direct sums, so it suffices to show that it is exact.
    For this it in turn suffices to show that $\sheaf F \otimes_{\sheaf R_X} \sheaf M$ is soft.

    By the construction of Lemma~\ref{lem:enough_flats}, $\sheaf F$ has a resolution
    \[
        \cdots \to \sheaf F^{-2} \to \sheaf F^{-1} \to \sheaf F^{0} \to \sheaf F \to 0
    \]
    such that each $\sheaf F^i$ is a direct product of sheaves of the form $\sheaf R_U\langle \lambda \rangle$ for $U \subseteq X$ open and $\lambda \in \Lambda_X(U)$.
    It follows that $\sheaf F^i \otimes_{\sheaf R_X} \sheaf M$ is a direct sum of shifts of restrictions of $\sheaf M$ and hence is soft.
    As $\sheaf M$ is flat, we obtain an exact sequence
    \[
        \cdots \to \sheaf F^{-2} \otimes_{\sheaf R_X} \sheaf M \to \sheaf F^{-1} \otimes_{\sheaf R_X} \sheaf M \to \sheaf F^{0} \otimes_{\sheaf R_X} \sheaf M \to \sheaf F \otimes_{\sheaf R_X} \sheaf M \to 0,
    \]
    where each $\sheaf F^i \otimes_{\sheaf R_X} \sheaf M$ is soft.
    Thus Lemma~\ref{lem:coh_dim_and_soft_sequence} implies that $\sheaf F \otimes_{\sheaf R_X} \sheaf M$ is soft as well.
\end{proof}

\begin{Lemma}
    If $X$ has finite cohomological dimension, then the sheaf $\sheaf R$ has a finite resolution by soft and flat modules.
\end{Lemma}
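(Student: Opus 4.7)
The plan is to reduce the problem to the ungraded setting by tensoring $\sheaf R$ with a classical soft-flat resolution of the constant sheaf $\ZZ_X$. Since $X$ is locally compact of finite cohomological dimension, classical ungraded sheaf theory (cf.~\cite[Chapter~IX]{Iversen:1986:CohomologyOfSheaves}) provides a finite resolution
\[
    0 \to \sheaf A^{-n} \to \dotsb \to \sheaf A^0 \to \ZZ_X \to 0
\]
in which each $\sheaf A^i$ is simultaneously $c$-soft and flat over $\ZZ_X$. I would view each $\sheaf A^i$ as a $\Lambda$-graded sheaf concentrated in degree $0$ and set $\sheaf L^i := \sheaf A^i \otimes_{\ZZ_X} \sheaf R \in \catSh{X,\Lambda,\sheaf R}$. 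Because each $\sheaf A^i$ is $\ZZ$-flat, tensoring the displayed resolution by $\sheaf R$ over $\ZZ_X$ preserves exactness, producing a finite resolution $0 \to \sheaf L^{-n} \to \dotsb \to \sheaf L^0 \to \sheaf R \to 0$ of $\sheaf R$ in $\catSh{X,\Lambda,\sheaf R}$.

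It then remains to verify that every $\sheaf L^i$ is both flat and soft. Flatness over $\sheaf R$ follows from the associativity identification $\sheaf F \otimes_{\sheaf R} \sheaf L^i \cong \sheaf F \otimes_{\ZZ_X} \sheaf A^i$, which is exact in $\sheaf F$ since $\sheaf A^i$ is $\ZZ$-flat. For softness in the sense of the paper, I would observe that for any open $U \subseteq X$ and any $\lambda \in \Lambda(U)$ the degree-$\lambda$ component is the ungraded tensor product $(\sheaf L^i|_U)_\lambda = (\sheaf R|_U)_\lambda \otimes_{\ZZ_U} \sheaf A^i|_U$, and apply the classical fact that the tensor of an arbitrary sheaf with a $c$-soft sheaf is again $c$-soft \cite[Proposition~2.5.10]{KashiwaraSchapira:1994:SheavesOnManifolds}.

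The main obstacle---and essentially the only nontrivial input---is the classical existence of a finite soft-flat resolution of $\ZZ_X$ under the standing hypothesis that $X$ is locally compact of finite cohomological dimension. I expect that the most obvious alternative, truncating a flat resolution from Lemma~\ref{lem:enough_flats}, will not work cleanly: Lemma~\ref{lem:coh_dim_and_soft_sequence} propagates softness toward the right-hand end of an exact sequence, whereas the kernel of a truncated flat resolution sits at the left-hand end. The tensoring construction above sidesteps this asymmetry by importing the needed softness directly from the ungraded resolution $\sheaf A^\bullet$.
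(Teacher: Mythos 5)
Your reduction-by-tensoring idea is attractive, but the classical input you feed into it does not exist as stated, and this is a genuine gap rather than a citation slip. You ask for a finite \emph{left} resolution $0 \to \sheaf A^{-n} \to \dotsb \to \sheaf A^0 \to \ZZ_X \to 0$ with each $\sheaf A^i$ c-soft and flat; already the first step $\sheaf A^0 \onto \ZZ_X$ is impossible in general. Indeed, on $X = \RR^n$ ($n \ge 1$), suppose $0 \to \sheaf N \to \sheaf S \to \ZZ_X \to 0$ is exact with $\sheaf S$ c-soft. Since $H^k_c(X,\sheaf S) = 0$ for $k \ge 1$, the long exact sequence for $\Gamma_c$ embeds $H^n_c(\RR^n,\ZZ_X) \cong \ZZ$ into $H^{n+1}_c(\RR^n,\sheaf N)$, which vanishes for every sheaf $\sheaf N$ because $\RR^n$ has cohomological dimension $n$; contradiction. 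The same obstruction shows that the conclusion you aim for (a left resolution of $\sheaf R$ by soft sheaves) is itself false in general, e.g.\ for $\Lambda = 0$, $\sheaf R = \ZZ_X$, $X = \RR^n$. What classical theory provides (truncated Godement resolution, with softness of the last term coming from finite cohomological dimension exactly as in Lemma~\ref{lem:coh_dim_and_soft_sequence}; see \cite[Proposition~VI.1.3]{Iversen:1986:CohomologyOfSheaves}) is a finite \emph{right} resolution $0 \to \ZZ_X \to \sheaf K^0 \to \dotsb \to \sheaf K^n \to 0$ by c-soft flat sheaves, and a right resolution of $\sheaf R$ is also what the proof of Theorem~\ref{thm:duality} needs. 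Your own closing remark already contains the warning: Lemma~\ref{lem:coh_dim_and_soft_sequence} propagates softness to the right-hand end of a sequence, so soft coresolutions are available while soft ``covers'' are not.

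Once the direction is corrected, your tensoring device does work: set $\sheaf L^i = \sheaf K^i \otimes_{\ZZ_X} \sheaf R$; then $0 \to \sheaf R \to \sheaf L^0 \to \dotsb \to \sheaf L^n \to 0$ is exact (stalkwise a bounded exact complex of flat $\ZZ$-modules stays exact after tensoring), each $\sheaf L^i$ is $\sheaf R$-flat by your associativity argument, and $(\res{\sheaf L^i}{U})_\lambda \cong \res{\sheaf K^i}{U} \otimes_{\ZZ_U} (\res{\sheaf R}{U})_\lambda$ is soft because $\sheaf K^i$ is c-soft \emph{and flat} --- the fact you want is the one the paper invokes as \cite[Lemma~2.5.12]{KashiwaraSchapira:1994:SheavesOnManifolds}, which does require flatness of the soft factor (harmless here), not Proposition~2.5.10. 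The repaired argument is genuinely different from the paper's proof, which runs Iversen's construction directly for the graded sheaf $\sheaf R$ and is precisely where the standing noetherian hypothesis enters (via \cite[Lemma~VI.1.4]{Iversen:1986:CohomologyOfSheaves}); your route only uses the classical $\ZZ$-coefficient resolution and so proves this particular lemma without assuming $\sheaf R$ noetherian, which is a worthwhile observation --- but as written, with the left-handed resolution, the proof fails at its very first step.
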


\begin{proof}
    This is proven exactly as in the ungraded situation, see \cite[Proposition~VI.1.3]{Iversen:1986:CohomologyOfSheaves}.
    Note that this is where the assumption that $\sheaf R$ is noetherian is used (via \cite[Lemma~VI.1.4]{Iversen:1986:CohomologyOfSheaves})
\end{proof}

\begin{proof}[Proof of Theorem~\ref{thm:duality}]
    By Lemma~\ref{lem:duality_representability}, for any flat and soft $\sheaf R_X$-module $\sheaf M$ and any $\sheaf R_Y$-module $\sheaf G$ there exists a $\sheaf R_X$-module $f^!_{\gr,\sheaf M}(\sheaf G)$ and a canonical isomorphism
    \[
        \Hom_{\sheaf R_Y}(f_{\gr,!}(\sheaf F \otimes_{\sheaf R_X} \sheaf M), \sheaf G) \cong
        \Hom_{\sheaf R_X}(\sheaf F,\, f^!_{\gr,\sheaf M}(\sheaf G))
    \]
    for any $\sheaf R_X$-module $\sheaf F$.
    As the functor $f_{\gr,!}({-} \otimes_{\sheaf R_X} \sheaf M)$ is exact by the proof of Lemma~\ref{lem:duality_representability}, if $\sheaf G$ is injective, so is $f^!_{\gr,\sheaf M}(\sheaf G)$.
    From here one bootstraps up to the derived statement in the usual manner by taking $\sheaf M$ to be a finite soft and flat resolution of $\sheaf R_X$, see \cite[Theorem~VII.3.1]{Iversen:1986:CohomologyOfSheaves} or \cite[Theorem~3.1.5 and Proposition~3.1.10]{KashiwaraSchapira:1994:SheavesOnManifolds} for details.
\end{proof}

If $f\colon X \to Y$ and $g \colon Y \to Z$ are two morphisms of ringed graded topological spaces, then $\RR(g \circ f)_{\gr,!} \cong \RR g_{\gr,!} \circ \RR f_{\gr,!}$ and hence $\RR(g \circ f)_{\gr}^! \cong \RR f_{\gr}^! \circ \RR g_{\gr}^!$

Let $k$ be a commutative ring.
Recall that a dualizing complex for $k$ is a complex of $k$-modules $\omega_k \in \cat D_f^b(k)$ of finite injective dimension such that the canonical map $k \to \RHom_k(\omega_k,\, \omega_k)$ is an isomorphism \stackcite{0A7B}.
From now on we assume that $k$ has a dualizing complex $\omega_k$, which we fix \stackcite{0BFR}.
For example if $k$ is a field, one can take $\omega_k = k$.

\begin{Definition}
    Let $(X,\,\Lambda,\,\sheaf R)$ be a ringed graded topological space of finite cohomological dimension such that $\sheaf R$ is a graded sheaf of $k$-algebras.
    Let $p\colon X \to (\pt,\, 0,\, k)$ be the canonical map.
    We call $\dc_X = p_{\gr}^!\omega_k$ the \emph{dualizing complex} of $X$ and $\DD_X = \RsheafHom({-},\, \dc_X)$ the \emph{dualizing functor}.
\end{Definition}

\begin{Remark}\label{rem:how_to_compute_duality}
    Consider a ringed graded space $X = (X,\, \Lambda,\, \sheaf R_X)$ and let $\underline X = (X, 0, k_X)$ be the underlying topological space.
    Let $\pi\colon X \to \underline X$ be the canonical map.
    Then for any $\Lambda$-graded sheaf $\sheaf F$ on $X$ and any $\lambda \in \Lambda(X)$ one has $\pi_{\gr,*}(\sheaf F\langle \lambda\rangle) = \pi_{\gr,!}(\sheaf F\langle \lambda\rangle) = \sheaf F_\lambda$.
    Suppose we know the dualizing complex $\omega_{\ul X}$.
    Then,
    \begin{align*}
        (\omega_X)_\lambda & \cong
        \pi_{\gr,*}\RsheafHom_{\sheaf R_X}(\sheaf R_X\langle \lambda\rangle,\, \pi_{\gr}^!\omega_{\underline X}) \\ & \cong
        \RsheafHom_{k_X}(\sheaf R_{X,-\lambda},\, \omega_{\underline X}).
    \end{align*}
    Thus, knowing duality for $\underline X$, it is often not too hard to determine the dualizing complex for $X$.
\end{Remark}

\begin{Corollary}
    Let $f\colon X \to Y$ be a morphism of graded ringed spaces and assume that $X$ has finite cohomological dimension.
    Then:
    \begin{enumerate}
        \item \label{cor:duality_identities:1} $\RR f_{\gr}^! \RsheafHom_{\sheaf R_Y}(\sheaf F, \sheaf G) \cong \RsheafHom_{\sheaf R_X}(\LL f_{\gr}^*\sheaf F,\, f_{\gr}^!\sheaf G)$ for any $\sheaf F, \sheaf G \in \catDbSh{Y,\, \Lambda_Y,\, \sheaf R_Y}$.
        \item \label{cor:duality_identities:2} $\RR f_{\gr,*} \circ \DD_X \cong \DD_Y \circ \RR f_{\gr,!}$.
        \item \label{cor:duality_identities:3} $f_{\gr}^! \circ \DD_Y \cong \DD_X \circ \LL f_{\gr}^*$.
    \end{enumerate}
\end{Corollary}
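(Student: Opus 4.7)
The plan is to establish (i) first, and then obtain (ii) and (iii) as almost immediate corollaries using the key identity $f_{\gr}^!\dc_Y = \dc_X$, which follows from the compatibility $\RR(g\circ f)_{\gr}^! \cong \RR f_{\gr}^! \circ \RR g_{\gr}^!$ stated just before the definition of $\dc_X$, applied to $f$ composed with $p_Y\colon Y \to (\pt, 0, k)$.

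For (i), I would run a Yoneda argument, chaining together five adjunctions. Test against an arbitrary $\sheaf H \in \catDSh{X,\Lambda_X,\sheaf R_X}$ and compute
\begin{align*}
    \RHom_{\sheaf R_X}\bigl(\sheaf H,\, f_{\gr}^!\RsheafHom_{\sheaf R_Y}(\sheaf F,\sheaf G)\bigr)
    &\cong \RHom_{\sheaf R_Y}\bigl(\RR f_{\gr,!}\sheaf H,\, \RsheafHom_{\sheaf R_Y}(\sheaf F,\sheaf G)\bigr) \\
    &\cong \RHom_{\sheaf R_Y}\bigl(\RR f_{\gr,!}\sheaf H \Lotimes_{\sheaf R_Y} \sheaf F,\, \sheaf G\bigr) \\
    &\cong \RHom_{\sheaf R_Y}\bigl(\RR f_{\gr,!}(\sheaf H \Lotimes_{\sheaf R_X} \LL f_{\gr}^*\sheaf F),\, \sheaf G\bigr) \\
    &\cong \RHom_{\sheaf R_X}\bigl(\sheaf H \Lotimes_{\sheaf R_X} \LL f_{\gr}^*\sheaf F,\, f_{\gr}^!\sheaf G\bigr) \\
    &\cong \RHom_{\sheaf R_X}\bigl(\sheaf H,\, \RsheafHom_{\sheaf R_X}(\LL f_{\gr}^*\sheaf F, f_{\gr}^!\sheaf G)\bigr),
\end{align*}
where the first and fourth isomorphisms are Theorem~\ref{thm:duality}, the second and fifth are tensor-hom (Proposition~\ref{prop:monoidal-model-structure}), and the middle one is the projection formula (Proposition~\ref{prop:projection_formula}). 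To upgrade from $\RHom$ to $\RsheafHom$, I would either (a) test instead with $\sheaf H$ a restriction $\sheaf R_U\langle -\lambda\rangle$ and read off each local section of the internal Hom, or (b) rerun the same chain applying $\RR f_{\gr,*}\RsheafHom_{\sheaf R_X}(\sheaf H, -)$ using the sheafy forms of Theorem~\ref{thm:duality} and of tensor-hom, then invoke the sheafy Yoneda. Both proceed in the same way.

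Parts (ii) and (iii) are then cheap. For (ii), take $\sheaf G = \dc_Y$ in the second (internal) isomorphism of Theorem~\ref{thm:duality} and combine with $f_{\gr}^!\dc_Y = \dc_X$. For (iii), take $\sheaf G = \dc_Y$ in part (i): the left side becomes $f_{\gr}^!\DD_Y \sheaf F$, the right side becomes $\DD_X \LL f_{\gr}^*\sheaf F$.

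The main obstacle is verifying the hypothesis of the projection formula in step three, namely that $\sheaf F$ admits a finite flat resolution; for bounded $\sheaf F$ over a noetherian $\sheaf R_Y$ this can be arranged using the flat resolutions produced in Lemma~\ref{lem:enough_flats}, possibly after truncation using the finite cohomological dimension of $X$. The remaining subtlety is just keeping careful track of boundedness so that every occurrence of $f_{\gr}^!$, $\LL f_{\gr}^*$ and $\RsheafHom$ lands in $\catDSh[+]{}$ where the relevant adjunctions have been established; after that everything is bookkeeping.
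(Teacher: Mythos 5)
Your proposal is correct and takes essentially the same route as the paper: part (i) is exactly the chain of adjunctions (duality adjunction, tensor--hom, projection formula) that the paper delegates to Kashiwara--Schapira, Proposition~3.1.13, and parts (ii) and (iii) are obtained, as in the paper, by specializing to $\sheaf G = \dc_Y$ together with the identity $f_{\gr}^!\dc_Y \cong \dc_X$ coming from composition of upper-shriek functors. You simply make explicit two points the paper leaves implicit, namely that identity and the finite-flat-resolution hypothesis needed to apply the projection formula to $\sheaf F$ (note, though, that finite cohomological dimension of $X$ by itself does not produce such a resolution over $\sheaf R_Y$; this gloss is shared with the paper).
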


\begin{proof}
    As in the classical case, \ref{cor:duality_identities:1} follows from Theorem~\ref{thm:duality}, tensor-hom adjunction and the projection formula (Proposition~\ref{prop:projection_formula}), see~\cite[Proposition~3.1.13]{KashiwaraSchapira:1994:SheavesOnManifolds}.
    Assertion \ref{cor:duality_identities:2} is immediate from Theorem~\ref{thm:duality} with $\sheaf G = \dc_Y$, while \ref{cor:duality_identities:3} follows from \ref{cor:duality_identities:1} in the same manner.
\end{proof}

\printbibliography

\end{document}